\documentclass{amsart}
\usepackage[T1]{fontenc}
\usepackage{amsmath}
\usepackage{amsfonts} 
\usepackage{amsthm}
\usepackage{amssymb}
\usepackage{tikz-cd}
\usepackage{derivative}
\usepackage{dirtytalk}

\usepackage[backend=biber,
style=numeric,giveninits=true]{biblatex} 
\theoremstyle{plain}
\newtheorem{thm}{Theorem}[section]
\newtheorem{lem}[thm]{Lemma}
\newtheorem{obs}[thm]{Observation}

\newtheorem{cor}[thm]{Corollary}

\newtheorem*{claim*}{Claim}

\theoremstyle{definition}

\theoremstyle{remark}
\newtheorem{rem}{Remark}[thm]

\usepackage{enumitem}

\numberwithin{equation}{thm} 

\newlist{eqenum}{enumerate}{1}

\setlist[eqenum]{
  label=(\theequation),
  ref=\theequation,
  leftmargin=1.5cm,
  before=\let\olditem\item
         \renewcommand{\item}{\refstepcounter{equation}\olditem}
}

\newcommand{\R}{\mathbb{R}}
\newcommand{\C}{\mathbb{C}}
\newcommand{\K}{\mathbb{K}}

\newcommand{\Z}{\mathcal{Z}}

\newcommand{\ns}{\mathrm{ns}}
\newcommand{\sing}{\mathrm{sing}}
\newcommand{\Zar}{\mathrm{Zar}}

\newcommand{\OO}{\mathcal{O}}
\newcommand{\id}{\mathrm{id}}
\newcommand{\Emb}{\mathrm{Emb}}

\DeclareMathOperator{\Int}{int}
\DeclareMathOperator{\supp}{supp}
\title{Optimal embedding dimension in the Nash--Tognoli theorem}
\author{Juliusz Banecki}

\address{Faculty of Mathematics and Computer Science,
Jagiellonian University, ul. Lojasiewicza 6, 30-348 Krakow, Poland}
\email{juliusz.banecki@student.uj.edu.pl}
\subjclass[2020]{14P05, 57R52}
\keywords{real algebraic set, smooth manifold, algebraic approximation}

\begin{document}
\begin{abstract}
We prove that every smooth compact submanifold of $\R^n$ can be approximated up to a small isotopy by the real locus of a nonsingular complex algebraic subset of $\C^n$ defined over $\R$. This settles a version of a conjecture posed in 1952 by Nash. Moreover, we show that if the codimension of the manifold being approximated is at least two, then the approximating real algebraic sets can be chosen to all have the same predetermined biregular isomorphism type.  
\end{abstract}
\maketitle
\section{Introduction}
In this paper a smooth manifold is always meant to be without boundary, unless explicitly stated otherwise. It is not assumed connected. 

The problem of approximating smooth compact submanifolds of $\R^n$ by algebraic sets up to a small isotopy has a long and rich history. It began with the work of Seifert \cite{seifertAlgebraischeApproximationMannigfaltigkeiten1936}, who proved a positive result in the case of hypersurfaces. Later, in his 1952 paper \cite{nashRealAlgebraicManifolds1952} Nash was able to show that if $M\subset \R^n$ is a smooth compact connected manifold and $n>2 \dim M$, then $M$ can be approximated by a nonsingular connected component of a real algebraic set in $\R^n$. He conjectured that the extra connected components could be avoided, and that the restriction on the dimension of the ambient space was unnecessary. His paper became one of the motivating forces for the development of real algebraic geometry in the second half of the twentieth century. Twenty years later, in \cite{tognoliSuCongetturaDi1973}, Tognoli showed that indeed one can get rid of the unnecessary connected components, as long as the inequality $n>2 \dim M$ is satisfied. The bound was later improved to $(2n-1)/3>\dim M$ by Ivanov \cite{ivanovImprovementNashTognoliTheorem1984}. 

A significant improvement came in 1992 in the paper \cite{akbulutApproximatingSubmanifoldsAlgebraic1992} by Akbulut and King. Among other results concerning approximation of smooth submanifolds of general algebraic sets, they were able to prove the following two theorems:
\begin{thm}[{\cite[Theorem A]{akbulutApproximatingSubmanifoldsAlgebraic1992}}]\label{thm:AKWeak}
Let $M\subset \R^n$ be a smooth compact submanifold. Then it can be approximated by the nonsingular locus of a possibly singular real algebraic subset of $\R^{n}$. In particular, if $M$ is connected then it can be approximated by a nonsingular connected component of an algebraic subset of $\R^n$.
\end{thm}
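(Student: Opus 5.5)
The plan is to follow the classical Nash–Tognoli–Akbulut–King strategy: first make $M$ cobordant to an algebraic model, then pass from the cobordism to the actual submanifold via a transversality argument in a tubular neighbourhood. First, by the Milnor–Thom result that unoriented cobordism classes are generated by real algebraic manifolds (products of real projective spaces and Dold manifolds, which are nonsingular real algebraic), I will produce a compact smooth manifold $W$ with $\partial W = M \sqcup N$, where $N$ is (diffeomorphic to) a nonsingular real algebraic set. This part is essentially topological; the only care needed is that the cobordism should be realized compatibly with the normal data of $M\subset\R^n$. That is why I would rather work with the Grassmannian: the Gauss map $M\to G(k,n)$ sending $x$ to its normal space, composed into a large Grassmannian, and a cobordism of this map (a bordism class in $\mathfrak{N}_*(G)$) to an algebraic map from an algebraic model. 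Akbulut–King use that $\mathfrak{N}_*(BO)$ is generated by algebraic maps, and I will invoke that.

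Second, I will embed the double $D = W\cup_N W$, or more precisely the union of $W$ with a collar, into some $\R^n\times\R^m$ so that $M$ sits as the zero set of a smooth function $f: D\to \R^{m'}$ transverse to $0$. Then I will approximate $D$ by a nonsingular algebraic set using the Nash–Tognoli theorem in high codimension (where $n>2\dim$ is available after enlarging the ambient space). I will also approximate $f$ by a polynomial map, using Stone–Weierstrass, so that the zero locus is a nonsingular algebraic subset $M'$ of the approximated $D'$ lying in $\R^{n+m}$ and isotopic to $M$.

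Third, I will project back to $\R^n$. Generic linear projection of $M'$ does not preserve algebraicity of the image as a nonsingular set. This is exactly why the statement only claims approximation by the \emph{nonsingular locus} of a possibly singular algebraic set. Concretely, I will use the normal bundle: the algebraic Gauss-type map built in step one lets me realize the algebraic approximant as the zero set, in a neighbourhood of $M$ in $\R^n$, of a polynomial map $\R^n\to\R^{n-\dim M}$ obtained by pulling back the algebraic normal bundle. Then $M$ is $C^\infty$-close to a component of the zero set $Z$ whose points near $M$ are nonsingular, by transversality being an open condition. Any other pieces of $Z$ stay away from a tubular neighbourhood of $M$, or lie in the singular locus. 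Taking the Zariski closure, $M'$ is contained in the nonsingular locus.

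The main obstacle is the first step: ensuring that the normal bundle of $M$ is (stably) the pullback of an algebraic bundle via a map homotopic to an algebraic one. I would handle it by passing to the bordism statement, so that $M$ itself need not carry an algebraic structure in advance; this is the heart of Akbulut–King's argument. The final "in particular" follows since a connected $M$ then gives a connected component of the nonsingular locus near $M$, and the rest of $Z$ is disjoint from a small neighbourhood.
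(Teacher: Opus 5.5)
There is a genuine gap, and it is in your third step, which is where the content of the theorem lies. Your first two steps (bordism to an algebraic manifold, doubling, approximation in a larger ambient space) only reproduce the Nash--Tognoli theorem, i.e.\ an abstract nonsingular algebraic model $W$ of $M$. That result can simply be quoted, and the bordism of the Gauss map is not needed. The difficulty is returning to $\R^n$, and there the argument fails on two counts.

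First, as stated, the step is impossible in general. If $F\colon U\to\R^{n-\dim M}$ on a neighbourhood $U$ of $M$ has $M$ as a transverse zero set, then $dF$ trivializes the normal bundle of $M$. This fails e.g.\ for $\R P^2\subset\R^4$, since $w_1(\R P^2)\neq 0$. Pulling back an algebraic bundle instead does not rescue this. A polynomial map $\R^n\to G(k,N)$ is null-homotopic because $\R^n$ is contractible, so it only pulls back trivial bundles. Allowing rational maps with poles instead adds to the Zariski closure points on the polar locus over which you have no control.

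Second, even granting the step, you only obtain $M'\subset X^{\ns}$, and you explicitly tolerate other pieces of $Z$ far from $M$. The theorem asserts $M'=X^{\ns}$, i.e.\ that every other point of the Zariski closure $X$ is singular. Nothing in your proposal forces this, so at best you would reach the weaker ``in particular'' clause.

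The missing idea is the Akbulut--King singularity-shifting argument, which the paper adapts in the proof of Lemma \ref{lem2}. Your remark about generic projections actually points the right way: one keeps the image of a polynomial approximation and controls its Zariski closure, rather than cutting $M'$ out by equations.

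Take the Nash--Tognoli model $W$ and a diffeomorphism $i\colon W\to M$. By Lemma \ref{lem1} and a generic linear change of coordinates, assume $i$ is polynomial with $\pi_1\circ i_\C\colon W_\C\to\C^{n-1}$ finite; this only needs $n\geq\dim W+1$. The compact, conjugation-invariant set $T=(\pi_1\circ i_\C)^{-1}(\pi_1\circ i(W))$ contains every point of $W_\C$ that could map into $i(W)$ after the first coordinate is modified.

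Lemma \ref{lem:functionApproxAK} then yields a real polynomial $\tilde\psi$ with three properties:
\begin{itemize}
\item it is $\mathcal C^\infty$-small near $W$;
\item it has small imaginary part on $T$;
\item it is close to a large constant on $T$ outside a neighbourhood of $W$ on which $i_\C$ is an embedding.
\end{itemize}
Put $j=i+\tilde\psi e_1$. Comparing first coordinates, estimating imaginary parts, and using injectivity of $j_\C$ near $W$ gives $j_\C^{-1}(j(W))=W$, with $j_\C$ finite. Lemma \ref{lem:immersion+injection=>nonsingular} and Observation \ref{obs:strengthening} then give $X^{\ns}=j(W)$ for $X=\overline{j(W)}^{\Zar}$.

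Finally, $X^{\ns}$ is open in $X$ and compact, so it is a union of connected components of $X$. This yields the ``in particular'' statement when $M$ is connected.
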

\begin{thm}[{\cite[Theorem B]{akbulutApproximatingSubmanifoldsAlgebraic1992}}]\label{thm:AKStrong}
Let $M\subset \R^n$ be a smooth compact submanifold. Then the smooth submanifold $M\times\{0\}\subset \R^{n+1}$ of $\R^{n+1}$ can be approximated by a nonsingular real algebraic subset of $\R^{n+1}$.
\end{thm}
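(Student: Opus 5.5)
The plan is to deduce Theorem~\ref{thm:AKStrong} from Theorem~\ref{thm:AKWeak} by the classical Akbulut--King trick: use the extra coordinate to separate the good part of an algebraic set from its singular part. First I will apply Theorem~\ref{thm:AKWeak} to $M\subset\R^n$. It yields a real algebraic set $V\subset\R^n$ whose nonsingular locus contains a compact union of connected components $M'$ that is isotopic to $M$ and $C^\infty$-close to it. Write $V=M'\sqcup W$ with $W$ closed and disjoint from $M'$; indeed $W$ is the union of the remaining components of $V\setminus \sing V$ together with $\sing V$. Since $M'$ is compact and $W$ is closed, $\operatorname{dist}(M',W)>0$.

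Next I will choose a polynomial $p\colon\R^n\to\R$ that is close to $1$ on a neighbourhood of $M'$ and close to $0$ on $W\cap B$, where $B$ is a large ball. To get it, I take a smooth function equal to $1$ near $M'$ and $0$ near $W$, and approximate it by Stone--Weierstrass. I then control behaviour at infinity by multiplying with a factor such as $(1+|x|^2)^{-N}$, or by working after compactifying via the inverse stereographic projection, which is a standard device of Akbulut--King. Let $f=\sum f_i^2$ be a polynomial with $V=f^{-1}(0)$. Consider the set
\[
X=\{(x,t)\in\R^{n+1} : f(x)=0,\ t\,(t-p(x))=0\}\cap\{\dots\},
\]
or, more cleverly, the single set $\{(x,t): f(x)^2+ (t^2-\epsilon p(x)^2\cdots)\}$. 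The idea is that the graph of $t=\epsilon\, p$-type functions separates $M'$ from $W$ in the $t$-direction. Concretely, I will take
\[
Y=\{(x,t)\in\R^{n+1} : f(x)=0,\ t=\epsilon\, q(x)\},
\]
where $q$ vanishes on $W$, is nonzero on $M'$, and is chosen in the ideal of $W$ (that is, $W\subset Z(q)$). Then $Y\cap\{t\neq 0\}$ is the graph over $M'$, and it is nonsingular there. The remaining piece is removed by intersecting with $\{t\neq0\}$ algebraically, replacing $Y$ by the Zariski closure of its part with $t\ne0$ and using $tq(x)$-type equations.

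The main obstacle is that $W$ may be Zariski dense in $V$ together with $M'$, so no polynomial vanishes on $W$ without vanishing on $M'$. This is exactly why the extra dimension is needed. The step I would try first is the Akbulut--King approach of using the nonsingularity of $M'$ to find, via a desingularization-free argument, a variety $V\times\R \supset Z$ on which $M'$ lifts to $\{t=1\}$-type heights while $W$ lifts only to $t=0$. Here $Z$ is defined by $t^2 g(x)=\dots$, with $g$ obtained by approximating a smooth function on $V$ by regular functions (relative Weierstrass on the algebraic set $V$). Finally I would translate and rescale the result to sit near $M\times\{0\}$, and check nonsingularity using the Jacobian criterion along the lifted copy.
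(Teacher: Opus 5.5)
There is a genuine gap. Your overall strategy is the right one: apply Theorem \ref{thm:AKWeak}, then use the extra coordinate to separate $V^{\ns}$ from $V^{\sing}$. But no step of the proposal actually produces a nonsingular set, and the obstacle you single out comes from reading Theorem \ref{thm:AKWeak} too weakly.

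That theorem gives an algebraic set $V$ whose \emph{entire} nonsingular locus $V^{\ns}$ is the approximant $M'$. You only keep the weaker fact that $V^{\ns}$ contains $M'$ as a union of components. With the correct reading, $W=V^{\sing}=V\cap S$, where $S\subset V_\C$ is the conjugation-invariant Zariski closed set of points at which $V_\C$ is not nonsingular. So $W$ is itself algebraic, and there is a real polynomial $q$ (a sum of squares of real generators of the ideal of $S$) with $\Z(q)\cap V=V^{\sing}$ and $q>0$ on $M'$. Thus your ``main obstacle'' does not arise. Under your weaker reading it would in fact be fatal: extra nonsingular components lying in the Zariski closure of $M'$ cannot be removed by any polynomial or regular function, with or without an extra coordinate.

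Your concrete construction also fails. $Y=\{f=0,\ t=\epsilon q(x)\}$ is the graph of a polynomial over $V$, hence biregularly isomorphic to $V$ via the projection. It therefore contains $V^{\sing}\times\{0\}$ with all its singularities. The Zariski closure of $Y\cap\{t\neq0\}$ is the graph of $\epsilon q$ over $\overline{M'}^{\Zar}$, which in general (e.g.\ for $V$ irreducible) is all of $V$ again. The condition $t\neq0$ cannot be imposed algebraically without yet another variable. The variants with $t(t-p(x))$ and $t^2g(x)=\dots$ are left unspecified, and a $p$ that is only approximately zero on $W$ separates nothing algebraically.

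The missing step is to divide rather than multiply. Set $\Gamma_\epsilon:=\{(x,t)\in\R^{n+1}: x\in V,\ t\,q(x)=\epsilon\}$.
\begin{itemize}
\item Over $V^{\sing}$ the equation reads $0=\epsilon$, so $\Gamma_\epsilon$ is exactly the graph of $\epsilon/q$ over the compact manifold $V^{\ns}$. The singular locus has been pushed to infinity.
\item The Zariski closure of $\Gamma_\epsilon$ lies in $\{(x,t)\in\C^{n+1}: x\in V_\C,\ t\,q(x)=\epsilon\}$. This set projects isomorphically onto $V_\C\setminus\Z(q_\C)\subset V_\C\setminus S$, which is smooth of pure dimension. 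At each point of $\Gamma_\epsilon$ the two germs coincide, since the real points form a maximal totally real submanifold. Hence $\Gamma_\epsilon$ is nonsingular.
\item Since $\epsilon/q\to0$ in the $\mathcal C^\infty$-topology on $V^{\ns}$, $\Gamma_\epsilon$ approximates $V^{\ns}\times\{0\}$ and therefore $M\times\{0\}$.
\end{itemize}

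The paper only cites this theorem; its Theorem \ref{thm:NT} recovers it. The device above is exactly the one used in the last step of the proof of Theorem \ref{thm:main}, where the shift $y_1\mapsto y_1+\varepsilon/p(y_2,\dots,y_n)$ takes place inside $\R^n$. There, with no new coordinate available, injectivity has to be prepared by the condition $\pi_1(S)\cap\pi_1(Y^{\ns})=\emptyset$ from Lemma \ref{lem3}. Here, the map $x\mapsto(x,\epsilon/q(x))$ is automatically injective.
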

Here, a real algebraic subset of $\R^n$ is said to be nonsingular if its Zariski closure in $\C^n$ is smooth near its real locus. The approximation considered here is up to a small isotopy (see Section \ref{sec:pre} for details).

Despite these two promising results, Akbulut and King were not able to get rid of the additional dimension in Theorem \ref{thm:AKStrong}. To the author's best knowledge, there has not been any progress on this problem since then.

The main goal of this paper is to finally solve the approximation problem without increasing the ambient dimension. In fact, we are even able to settle it under the additional requirement that the entire complexification of the approximating algebraic set is smooth:
\begin{thm}\label{thm:NT}
Let $n\geq 0$ be an integer and let $M$ be a smooth compact submanifold of $\R^n$. Then $M$ can be approximated by the real locus of a nonsingular complex algebraic subset of $\C^n$ defined over $\R$.
\end{thm}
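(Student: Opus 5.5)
The plan is to start from Theorem \ref{thm:AKStrong} and project the extra coordinate away, removing the resulting singularities by a perturbation. Let $d=\dim M$ and $c=n-d$.

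\textbf{Step 1: reduce to a projection.} By Theorem \ref{thm:AKStrong}, $M\times\{0\}\subset\R^{n+1}$ is approximated, up to a small isotopy, by a nonsingular real algebraic set $X\subset\R^{n+1}$. Its Zariski closure $X_\C\subset\C^{n+1}$ is smooth near $X$. After applying a generic resolution and removing the singular locus far from the real points, I may assume $X_\C$ is smooth away from a closed subset disjoint from $\R^{n+1}$. The projection $\pi\colon\R^{n+1}\to\R^n$ restricts on $X$ to a small perturbation of the embedding $M\hookrightarrow\R^n$, so $\pi|_X$ is an embedding and $\pi(X)$ is isotopic to $M$. The difficulty is that the Zariski closure of $\pi(X_\C)$ in $\C^n$ may acquire singularities, both from the non-real points of $X_\C$ and from their double points under $\pi$.

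\textbf{Step 2: control the complex picture.} I would replace $\pi$ by a generic real linear projection $\C^{n+1}\to\C^n$ that is close to $\pi$; this does not change the real isotopy class. For the complex image to be nonsingular, the map has to be an injective immersion on all of $X_\C$, including its points at infinity, and no non-real point may map to a real point. Generic linear projection achieves this only when $n>2d$, so in general it fails. Instead I would choose $X$ more carefully, as a complete intersection whose complexification is controlled. Concretely, I would write $M$ locally as the zero set of a real section $s$ of a vector bundle $E$ of rank $c$ over $\R^n$. Using the Akbulut--King techniques I would make $E$ algebraic, and I would take the approximating set to be the zero locus of an algebraic section $p$ of an algebraic bundle on $\C^n$ defined over $\R$, with $p$ approximating $s$. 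Bertini-type genericity over $\C$ then makes the complex zero locus smooth, provided the bundle is generated by global sections defined over $\R$, so that general real sections have smooth complex zero loci (Kleiman transversality). Real approximation of $s$ by $p$ is done with a Stone--Weierstrass argument on a compact neighborhood of $M$, and $p$ is then perturbed generically within a finite-dimensional space of sections.

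\textbf{Main obstacle.} The hard part is constructing an algebraic vector bundle over $\C^n$, or over $\mathbb{P}^n$ to handle compactness at infinity, that is defined over $\R$, globally generated, and whose real restriction is topologically isomorphic near $M$ to the normal bundle of $M$. The real zeros of a generic section must then be exactly a copy of $M$, with no extra real components. To exclude extra components I would use a section $p+\varepsilon q$ with $q$ chosen positive away from $M$, for instance by adding terms of the form $(1+|x|^2)^N$ times a fixed nowhere-vanishing real component. The isomorphism of the bundle with the normal bundle over $\R^n$ is possible because $\R^n$ is contractible, but the zero section must have the correct real zero set. I would first try to realize the bundle as the pullback of a tautological bundle on a real Grassmannian, using an algebraic approximation of the Gauss-type classifying map. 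Nonsingularity of the complex locus, including the points at infinity, would then follow by Bertini after twisting by $\OO(k)$ for large $k$.
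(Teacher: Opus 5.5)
\textbf{Gap: the zero-locus construction in Step 2 cannot work.} Your Step 1 stops where the real work begins. You correctly note that the Zariski closure of the projected set picks up singularities from non-real points and complex double points. You then drop the projection and switch to a zero-locus construction, and that construction cannot work in codimension $c\geq 2$.

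Let $E$ be an algebraic bundle of rank $c$ on $\C^n$ or $\mathbb{P}^n$, defined over $\R$. Let $p$ be a real section whose complex zero locus is smooth of codimension $c$, which is what Bertini/Kleiman gives you. Then $dp$ is surjective along the zero locus. So the normal bundle in $\R^n$ of the real zero locus is the restriction of the real bundle $E(\R)|_{\R^n}$, which is trivial because $\R^n$ is contractible. Hence your method only produces submanifolds with trivial normal bundle.

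The approximants are isotopic to $M$, so their normal bundle is $\nu_M$. Every nonorientable $M\subset\R^n$ (for instance $\R P^2\subset\R^4$) has $w_1(\nu_M)=w_1(TM)\neq 0$. Contractibility of $\R^n$ is therefore the obstruction, not the reason $E$ can be matched with $\nu_M$. Similarly, a classifying map $M\to G$ for $\nu_M$ extends over $\R^n$ only when $\nu_M$ is trivial. This is why Grassmannian constructions in the style of Nash--Tognoli cost an extra ambient dimension, as in Theorem \ref{thm:AKStrong}.

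\textbf{A second, independent problem.} Your last step asks for smoothness at infinity via Bertini on $\mathbb{P}^n$. That would make the Zariski closure of the approximant in $\C P^n$ nonsingular, since it is a union of components of a smooth variety. The remark after Theorem \ref{thm:NT} records that this is false in general \cite{akbulutTranscendentalSubmanifoldsRn1993}. In codimension one the normal bundle is trivial and a single-equation approach does work; there the paper just cites \cite[Theorem 7.1]{bochnakVectorBundlesReal1989}.

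\textbf{What is missing.} The idea you lack is to tolerate the complex singularities and push them away, rather than avoid them by transversality. For $c\geq 2$ the paper proceeds as follows.
\begin{enumerate}
\item Take a Nash--Tognoli model $W$ of $M$. Approximate the embedding by a polynomial map with finite complexification, such that $\pi_{1,2}$ is finite on the image; this step uses $c\geq 2$.
\item Apply Akbulut--King ``singularity shifting''. One coordinate is perturbed by a real polynomial $\tilde\psi$ from the Mergelyan-type Lemma \ref{lem:functionApproxAK}. This $\tilde\psi$ is $\mathcal C^\infty$-small near the real part and uniformly close to a real function equal to a large constant on the relevant compact sets of complex points. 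Hence no problematic complex point can collide with a real one afterwards: either the real shift is too large, or the imaginary part cannot cancel.
\item Doing this alternately in $x_1$ and $x_2$ gives $Y$ with $\pi_1(S)\cap\pi_1(Y^\ns)=\emptyset$ for the whole complex singular locus $S$ of $Y_\C$ (Lemma \ref{lem3}).
\item Choose a real polynomial $p(x_2,\dots,x_n)$ vanishing on $\pi_1(S)$ and positive on $\pi_1(Y^\ns)$. The map $y\mapsto(y_1+\varepsilon/p(y_2,\dots,y_n),y_2,\dots,y_n)$ on $Y_\C\setminus(\C\times\Z(p))$ is then proper and pushes $S$ off to infinity.
\end{enumerate}
The result is an $X_\C\subset\C^n$ that is nonsingular as an affine variety, though typically not at infinity, with real locus $\mathcal C^\infty$-close to $M$.
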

\begin{rem}
It is known that such a result is false if instead we consider approximation by the real loci of complex algebraic sets in $\C^n$ defined over $\R$, whose Zariski closures in $\C P^n$ are nonsingular \cite{akbulutTranscendentalSubmanifoldsRn1993}. 
\end{rem}
We derive Theorem \ref{thm:NT} from the following stronger result:
\begin{thm}\label{thm:main}
Let $m\geq 0$ be an integer and let $W\subset \R^m$ be a nonsingular compact real algebraic set. Let $n\geq \dim W+2$ be an integer. Suppose that we are given a smooth map $i:W\hookrightarrow \R^n$, which embeds $W$ into $\R^n$ as a smooth manifold. Then the map $i$ can be arbitrarily well approximated in the $\mathcal C^\infty$-topology by regular maps $j:W\rightarrow \R^n$ which satisfy the following conditions:
\begin{enumerate}
    \item the set $j(W)$ is a nonsingular algebraic subset of $\R^n$ and $j$ is a biregular isomorphism between $W$ and $j(W)$,
    \item the Zariski closure of $j(W)$ in $\C^n$ is a nonsingular complex algebraic set.
\end{enumerate}
\end{thm}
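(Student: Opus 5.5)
The plan is to first approximate $i$ by a regular map, then perturb that regular map generically inside a large finite-dimensional family, using codimension counts and transversality to obtain both conditions. By the Stone--Weierstrass theorem, or more precisely its $\mathcal C^\infty$ version for compact nonsingular algebraic sets, $i$ can be approximated in the $\mathcal C^\infty$-topology by polynomial maps $p:W\to\R^n$. Since embeddings form an open set, such a $p$ is still a smooth embedding.

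Fix a complexification $W_\C\subset\C^m$, namely the Zariski closure of $W$. I would like this to be nonsingular, which I would first arrange by replacing $W$ with a biregularly isomorphic model whose complexification is smooth. This replacement would use a resolution or the classical fact (Akbulut--King, Kucharz) that a compact nonsingular real algebraic set admits a model with nonsingular projective complexification. I then consider the family
\[
j_a = p+\sum_k a_k q_k, \qquad a\in\R^N\ \text{small},
\]
where the $q_k$ form a basis of polynomial maps of degree $\le d$, with $d$ large. For generic small $a$, the map $j_a$ is still a $\mathcal C^\infty$-close embedding on $W$.

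Condition (1) needs $j(W)$ to be algebraic and $j$ to be biregular onto it. Both properties will follow once the complexified map $j_\C:W_\C\to\C^n$ is known to be a closed embedding near infinity and an injective immersion on $W_\C$. I would get this by choosing the $q_k$ so that $j_\C$ is finite, hence proper. Then $j_\C(W_\C)$ is Zariski closed, and its real locus contains $j(W)$. I then need to exclude extra real points, namely images of non-real points $z\in W_\C\setminus W$. Such points satisfy $j_\C(z)=j_\C(\bar z)$ with $z\neq\bar z$, so it suffices to show that $j_\C$ is injective on $W_\C$. Double points, i.e. pairs $(z,w)$ with $j_\C(z)=j_\C(w)$ and $z\ne w$, impose $n$ conditions on a $2\dim W$-dimensional space, so for generic $a$ they are expected to exist when $n<2\dim W+1$. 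This means injectivity can fail, which is the crux.

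The fix I propose is to exploit $n\ge \dim W+2$ through the real structure: only double-point pairs $(z,\bar z)$ matter. These are parametrized by $W_\C$, which has real dimension $2\dim W$, and the condition $j_\C(z)=j_\C(\bar z)$ is $n$ complex, hence $2n$ real, conditions. Since $2n>2\dim W$, a real-parameter transversality argument (the parametric transversality theorem applied to the map $(z,a)\mapsto \operatorname{Im} j_a(z)$ on $W_\C\setminus W$) shows these are absent for generic $a$ away from a neighbourhood of $W$. Near $W$, the points $z$ with $\operatorname{Im} j(z)=0$ are controlled by the immersion property. Similarly, singular points of the Zariski closure come from non-injectivity or non-immersion of $j_\C$ together with behaviour at infinity. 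Immersion failure is a codimension-$(n-\dim W+1)\ge 3$ condition in complex terms, so it is avoided. The genuine complex double points, however, may remain, and these create singularities of the complex Zariski closure. To handle this I would allow the approximation to replace the image by $Z=j_\C(W_\C)$ only up to the real part, and then take the Zariski closure of $j(W)$. That closure equals the union of the components of $Z$ containing real points, which is all of $Z$.

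The main obstacle I expect is exactly condition (2), the complex double points, together with control at infinity. If the genericity argument above is insufficient, I would instead first embed $W$ into a smooth affine variety with a large automorphism group, or use a generic linear projection from $\R^{2\dim W+1}$ down to $\R^n$ combined with the real-only double-point count. I would then remove the complex singularities by a further generic polynomial perturbation supported in high degree.
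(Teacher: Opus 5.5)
There is a genuine gap, and it sits exactly at the step you call the crux. Since $j_\C$ is defined over $\R$, you have $j_\C(\bar z)=\overline{j_\C(z)}$, so the condition $j_\C(z)=j_\C(\bar z)$ is just $\operatorname{Im} j_\C(z)=0$. That is $n$ real equations, not $2n$; your own map $(z,a)\mapsto\operatorname{Im} j_a(z)$ takes values in $\R^n$.

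\begin{itemize}
\item Parametric transversality on the real $2\dim W$-dimensional set $W_\C\setminus W$ therefore gives, for generic $a$, a solution set of dimension $2\dim W-n$. This is nonnegative throughout $\dim W+2\le n\le 2\dim W$. Transversal solutions persist under small perturbations, so genericity does not remove them; for a surface in $\R^4$, such extra real points are isolated and stable whenever they occur. This is the same count behind the classical hypothesis $n>2\dim M$.
\item The locus where $dj_\C$ drops rank has codimension $n-\dim W+1$ in $W_\C$. It is not avoided generically unless $n\ge 2\dim W$, so your claim that immersion failure is avoided is also wrong in this range.
\item As you concede, complex double points cannot be avoided either. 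For polynomial $j$ with finite $j_\C$, the complex Zariski closure of $j(W)$ is exactly $j_\C(W_\C)$, so these double points are singular points of it. Noting that the closure is all of $Z$ confirms, rather than repairs, the failure of condition (2).
\item Your fallbacks (high-degree generic perturbations, generic projections from $\R^{2\dim W+1}$) run into the same counts. Making $W_\C$ nonsingular beforehand is irrelevant, because the singularities are created by the map.
\end{itemize}

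The missing idea is to stop trying to avoid these singularities and instead relocate them and then delete them. The paper first arranges that $\pi_{1,2}\circ i_\C$ is finite; this is where $n-2\ge\dim W$ enters. It then applies deliberately non-generic shears $x_1\mapsto x_1+\tilde\psi$. Here $\tilde\psi$ is a real polynomial from Lemma \ref{lem:functionApproxAK}, built on a compact set lying over the projection of the real part. It approximates a real function that vanishes near the real part and equals $4C$ on the part to be pushed away, with imaginary part controlled in between.

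Two such shears in alternating directions (Lemmas \ref{lem2} and \ref{lem3}) produce $Y=\overline{i(W)}^{\Zar}$ with the following properties:
\begin{itemize}
\item $Y^\ns=i(W)$, biregular to $W$, by Lemma \ref{lem:immersion+injection=>nonsingular} and Observation \ref{obs:strengthening}; all extra real points are now singular;
\item $\pi_1(S)\cap\pi_1(Y^\ns)=\emptyset$ for the entire complex singular locus $S$ of $Y_\C$.
\end{itemize}
Take a real polynomial $p$ vanishing on $\pi_1(S)$ and positive on $\pi_1(Y^\ns)$, and set $U=Y_\C\setminus(\C\times\Z(p))$. On $U$ the regular, non-polynomial map $y\mapsto(y_1+\varepsilon/p(y_2,\dots,y_n),y_2,\dots,y_n)$ is proper, because $\pi_{1,2}\vert_{Y_\C}$ is finite. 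It pushes $S$ to infinity, and its image is a closed nonsingular $X_\C\cong U$ whose real locus is close to $i(W)$ for small $\varepsilon$.

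So the final complexification is isomorphic to a Zariski open subset of a singular image, not the image of $W_\C$ itself. Your scheme, built only on small generic polynomial perturbations, has no mechanism to produce such an object.
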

For the definitions of a regular map and a biregular isomorphism see Section \ref{sec:pre}.
\begin{proof}[Proof that Theorem \ref{thm:main} implies Theorem \ref{thm:NT}]
The conclusion of Theorem \ref{thm:NT} is trivial if $M$ is of codimension zero. If $M$ is of codimension one, then it is known that one can even approximate $M$ by the real locus of a complex algebraic set in $\C^n$ defined over $\R$, whose Zariski closure in $\C P^n$ is nonsingular \cite[Theorem 7.1]{bochnakVectorBundlesReal1989}.

Assume that $M$ is of codimension at least $2$. By the Nash--Tognoli theorem \cite{tognoliSuCongetturaDi1973} there exists an integer $m\geq 0$ and a nonsingular real algebraic set $W\subset \R^m$ diffeomorphic to $M$ via a diffeomorphism $i:W\rightarrow M$. It suffices to apply Theorem \ref{thm:main} and take $X:=j(W)$ as the approximation of $M$.
\end{proof}

Theorem \ref{thm:main} admits the following further consequences:
\begin{cor}
Let $m\geq0$ be an integer and let $W\subset \R^m$ be a nonsingular compact real algebraic set. Let $n\geq \dim W+2$ be an integer. Assume that $W$ can be embedded in $\R^n$ as a smooth manifold. Then $W$ can be embedded in $\R^n$ as a real affine variety in the sense of \cite{bochnakRealAlgebraicGeometry1998}. Moreover, in this case the embedding can be chosen in such a way that the copy of $W$ embedded in $\R^n$ has nonsingular Zariski closure in $\C^n$.
\end{cor}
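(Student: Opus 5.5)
This is a direct consequence of Theorem \ref{thm:main}; the plan is simply to apply it. By assumption there is a smooth embedding $i:W\hookrightarrow \R^n$. Since $n\geq \dim W+2$, Theorem \ref{thm:main} applies to $i$ and produces a regular map $j:W\rightarrow\R^n$, close to $i$ in the $\mathcal C^\infty$-topology. We do not actually need the closeness; we only need that such a $j$ exists. It satisfies conditions (1) and (2).

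By (1), the image $X:=j(W)$ is a nonsingular algebraic subset of $\R^n$, and $j:W\rightarrow X$ is a biregular isomorphism. An algebraic subset of $\R^n$ with its sheaf of regular functions is a real affine variety in the sense of \cite{bochnakRealAlgebraicGeometry1998}. A biregular isomorphism is an isomorphism of such varieties, so $j$ embeds $W$ into $\R^n$ as a real affine variety. This gives the first claim. By (2), the Zariski closure of $X=j(W)$ in $\C^n$ is nonsingular, which is the ``moreover'' part.

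There is no real obstacle here. The only point to check is that the paper's notions of ``regular map'' and ``biregular isomorphism'' (Section \ref{sec:pre}) agree with the morphisms and isomorphisms of real affine varieties in \cite{bochnakRealAlgebraicGeometry1998}. They do: regular maps between algebraic subsets of affine spaces are exactly the morphisms of the corresponding affine varieties. Hence ``embedded as a real affine variety'' means precisely that $W$ is biregularly isomorphic to an algebraic subset of $\R^n$, and this is what (1) gives.
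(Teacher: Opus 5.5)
Your proposal is correct and is exactly the paper's argument: take the given smooth embedding $i$ and apply Theorem~\ref{thm:main}. Its conditions (1) and (2) give a biregularly isomorphic copy $j(W)\subset\R^n$ whose Zariski closure in $\C^n$ is nonsingular, and your check that the paper's regular maps coincide with the morphisms of \cite{bochnakRealAlgebraicGeometry1998} (the paper itself asserts this compatibility in Section~\ref{sec:pre}) is a harmless supplement.
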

\begin{proof}
Take $i:W\rightarrow \R^n$ as the smooth embedding of $W$ and apply Theorem \ref{thm:main}.
\end{proof}
\begin{cor}\label{cor:approxWithIsoType}
Let $m\geq 0,n\geq 2$ be integers. Let $M\subset \R^n$ be a smooth compact submanifold of $\R^n$ of codimension at least $2$, which is diffeomorphic to a nonsingular real algebraic set $W\subset \R^m$. Then $M$ can be approximated by algebraic subsets of $\R^n$ biregularly isomorphic to $W$. 
\end{cor}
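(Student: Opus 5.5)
The corollary should follow directly from Theorem \ref{thm:main}, in the same way as the previous corollary. Fix a diffeomorphism $\phi\colon W\to M$. Composing with the inclusion $M\subset\R^n$ gives a smooth map $i\colon W\hookrightarrow\R^n$ that embeds $W$ onto $M$. Since $M$ has codimension at least $2$, we have $n\geq \dim W+2$, so Theorem \ref{thm:main} applies to $i$. It produces regular maps $j\colon W\to\R^n$, arbitrarily close to $i$ in the $\mathcal C^\infty$-topology, such that $j(W)$ is a nonsingular algebraic subset of $\R^n$ and $j\colon W\to j(W)$ is a biregular isomorphism. In particular, each $j(W)$ is an algebraic subset of $\R^n$ biregularly isomorphic to $W$.

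It then remains to check that $j(W)$ approximates $M$ in the sense of Section \ref{sec:pre}, that is, up to a small isotopy. I would argue as follows. Since $W$ is compact, smooth embeddings form an open subset of $\mathcal C^\infty(W,\R^n)$, so $j$ is an embedding once it is close enough to $i$. By the standard isotopy lemma (for instance via a tubular neighbourhood of $M$), if $j$ is $\mathcal C^1$-close to $i$, then $j\circ i^{-1}\colon M\to\R^n$ is a $\mathcal C^1$-small embedding close to the inclusion of $M$. Such an embedding extends to a diffeotopy of $\R^n$ that is close to the identity and carries $M$ onto $j(W)$.

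The only point needing care is matching this with the precise notion of approximation fixed in Section \ref{sec:pre}. With the usual definition, namely the existence of a diffeomorphism $h$ of $\R^n$, arbitrarily close to the identity and isotopic to it through such diffeomorphisms, with $h(M)=j(W)$, the step above is routine. No other obstacle arises, since all the substance lies in Theorem \ref{thm:main}.
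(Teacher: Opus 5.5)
Your proposal is correct and is the paper's proof: choose a diffeomorphism $\phi\colon W\to M$ and apply Theorem \ref{thm:main} to the resulting embedding $i$ (compactness of $W$ and $n\geq\dim W+2$ both follow from $W\cong M$). The isotopy-extension step is more than needed, because Section \ref{sec:pre} defines approximation as finding, in any neighbourhood $\mathcal U$ of the inclusion $M\hookrightarrow\R^n$, a map $j'\in\mathcal U$ with $j'(M)\in\mathcal A$; it therefore suffices to note that $g\mapsto g\circ\phi^{-1}$ is a homeomorphism $\mathcal C^\infty(W,\R^n)\to\mathcal C^\infty(M,\R^n)$ sending $i$ to the inclusion, and to take $j'=j\circ\phi^{-1}$, so that $j'(M)=j(W)$.
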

\begin{proof}
Choose a diffeomorphism $i:W\rightarrow M$ and apply Theorem \ref{thm:main}.
\end{proof}
In \cite{nashRealAlgebraicManifolds1952}, Nash also considered the possibility of approximating smooth manifolds by nonsingular rational real algebraic sets. Corollary \ref{cor:approxWithIsoType} bears particular significance in this context:
\begin{cor}
Let $n\geq 2$ be an integer and let $M\subset \R^n$ be a smooth compact connected submanifold of codimension at least $2$. Then the following conditions are equivalent:
\begin{enumerate}
    \item $M$ can be approximated by rational nonsingular real algebraic subsets of $\R^n$, 
    \item $M$ is diffeomorphic to a rational nonsingular real algebraic set. \label{rationalDiffeo}
\end{enumerate}
\end{cor}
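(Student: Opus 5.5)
The implication from the second condition to the first is a direct application of Corollary~\ref{cor:approxWithIsoType}. Suppose $M$ is diffeomorphic to a rational nonsingular real algebraic set $W\subset\R^m$. Since $M$ is compact, we first reduce to the case where $W$ is compact, as Corollary~\ref{cor:approxWithIsoType} requires. Assume $W$ is not compact. Compose with an inverse stereographic projection $\R^m\to S^m\subset\R^{m+1}$, which is a biregular isomorphism onto its image. The image of $W$ in $S^m$ is bounded. It is semialgebraic, closed in $S^m\setminus\{\text{pole}\}$, and diffeomorphic to the compact manifold $M$, so it is compact. Hence it misses a neighbourhood of the pole and is therefore Zariski closed in $\R^{m+1}$. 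It is still nonsingular, and rational because rationality is preserved under biregular isomorphism. (Alternatively, the compact case may already be implicit in what ``rational nonsingular real algebraic set'' means here.) Applying Corollary~\ref{cor:approxWithIsoType} to $M$ and $W$, which is allowed because $M$ has codimension at least $2$, gives approximations of $M$ by algebraic subsets of $\R^n$ biregularly isomorphic to $W$. These are nonsingular, since biregular isomorphism preserves nonsingularity. They are also rational, since rationality (birationality to affine space in the appropriate irreducible sense; $M$ connected gives irreducibility of the relevant component) is a birational invariant, and biregular maps are in particular birational.

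For the converse, suppose $M$ is approximated by rational nonsingular algebraic sets $X\subset\R^n$. Approximation is up to a small isotopy, so in particular $X$ is diffeomorphic to $M$. Any one such $X$ is then a rational nonsingular real algebraic set diffeomorphic to $M$, which is the second condition.

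The only point needing care is the bookkeeping of definitions. We must check that the notion of ``rational'' used is invariant under biregular isomorphism and stereographic projection. We must also check that, with $M$ connected, the nonsingular algebraic set $W$ may be taken irreducible, or at least that rationality is inherited appropriately. I expect no real obstacle beyond this; the substantive content is entirely contained in Theorem~\ref{thm:main} via Corollary~\ref{cor:approxWithIsoType}.
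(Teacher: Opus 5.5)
Your argument is correct and is essentially the paper's: (2)$\Rightarrow$(1) follows from Corollary~\ref{cor:approxWithIsoType} together with invariance of rationality under biregular isomorphism, and (1)$\Rightarrow$(2) follows by taking one sufficiently close approximation. The stereographic-projection reduction is superfluous, because $W$ is homeomorphic to the compact $M$ and is therefore already compact, so that branch is vacuous --- which is fortunate, since its claim that a compact image in $S^m$ avoiding the pole is Zariski closed in $\R^{m+1}$ fails in general (for the ellipse $x^2+4y^2=1$, complex points of the ellipse at infinity are sent towards the pole, so the pole lies in the Zariski closure of the image).
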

\begin{proof}
If $M$ can be approximated by rational nonsingular real algebraic subsets of $\R^n$, then choosing a sufficiently close approximation gives a rational nonsingular real algebraic subset of $\R^n$ diffeomorphic to $M$.

Conversely, suppose that $M$ is diffeomorphic to a rational nonsingular real algebraic set $W\subset \R^m$ for some $m\geq 0$. Then Corollary \ref{cor:approxWithIsoType} shows that $M$ can be approximated by algebraic subsets of $\R^n$ biregularly isomorphic to $W$. Since rationality is preserved under biregular isomorphisms, these approximating algebraic subsets are rational.
\end{proof}
\begin{rem}
Condition \eqref{rationalDiffeo} has been studied extensively in the literature. There is a complete classification of possible diffeomorphism types of nonsingular compact rational surfaces due to an old result of Comessatti \cite{comessattiSullaConnessioneSuperficie1914}. The situation is far more complicated in dimension three and higher; see \cite[Chapter 6]{mangolteRealAlgebraicVarieties2020} for a survey.
\end{rem}

We would like to emphasise that our current approach does not provide Theorem \ref{thm:main} and its consequences if $n=\dim W+1$. The author conjectures that these results are true in that case as well.

\section{Preliminaries}\label{sec:pre}
\subsection{Algebraic sets and maps between them}
Let us establish some terminology which will be used throughout the paper. All the definitions introduced below are classical in the complex case. In the real case they are also standard and compatible with the ones from the book \cite{bochnakRealAlgebraicGeometry1998}, which contains a comprehensive treatment of real algebraic geometry.

For $\K=\R$ or $\C$, an algebraic set is a subset $X$ of $\K^n$ for some $n$ given as the zero set of a family of polynomials with coefficients in $\K$. Algebraic subsets of $\K^n$ form the family of closed sets in a certain topology, called the Zariski topology. By the dimension of an algebraic set $X$ we mean its dimension as a Noetherian topological space with the Zariski topology induced from $\K^n$. 

We say that a complex algebraic set $X\subset \C^n$ is defined over $\R$, if its defining equations can be chosen with coefficients in $\R$ (or equivalently, if it is invariant under complex conjugation). We say that it is nonsingular at a point $x\in X$, if the germ of $X$ at $x$ is a germ of a smooth complex manifold of dimension equal to the dimension of $X$. We say that $X$ is nonsingular, if it is nonsingular at each of its points (note that this definition of nonsingularity requires $X$ to be equidimensional).

Given a real algebraic set $X\subset \R^n$ we denote by $X_\C\subset \C^n$ its Zariski closure in $\C^n$. We call $X_\C$ the complexification of $X$. The nonsingular locus $X^{\ns}$ of $X$ consists of those points $x\in X$, at which $X_\C$ is nonsingular. The set $X^\ns$ is an analytic submanifold of $\R^n$. We define $X^{\sing}:=X\backslash X^{\ns}$ and say that $X$ is nonsingular if $X^\sing=\emptyset$. 

We say that a continuous $\K$-valued function $f:X\rightarrow \K$ on a Zariski locally closed set $X\subset \K^n$ is polynomial if it is the restriction of a polynomial $F\in \K[x_1,\dots,x_n]$. We say that $f$ is regular, if for every point $x\in X$ there are polynomials $P,Q\in \K[x_1,\dots,x_n]$ such that $Q(x)\neq 0$ and $f=\frac{P}{Q}$ holds on a neighbourhood of $x$ in $X$ in the Zariski topology. Given another algebraic set $Y\subset \K^m$ for some $m\geq0$ and a continuous map $f:X\rightarrow Y$, we say that it is polynomial (resp. regular) if each of its coordinate functions $f_1,\dots,f_m$ is polynomial (resp. regular). We say that a regular map $f:X\rightarrow Y$ is a biregular isomorphism if it is bijective and its inverse is also regular. If $\K=\R$ and $X\subset \R^n$ and $Y\subset \R^m$ are algebraic sets, given a polynomial map $f:X\rightarrow Y$ by $f_\C$ we denote its unique extension to a polynomial map $f_\C:X_\C\rightarrow Y_\C$ between the complex algebraic sets $X_\C$ and $Y_\C$. We call $f_\C$ the complexification of $f$. 

We will often use the standard fact that, for morphisms of complex algebraic varieties, algebraic properness is equivalent to properness of the associated map in the Euclidean topology. In particular, a Euclidean-proper regular map is Zariski closed, and finite morphisms are Euclidean-proper; see \cite[Appendix B]{hartshorneAlgebraicGeometry1977}.

Throughout the paper for $n\geq 2$ we use the following notation for some coordinate projections on $\C^n$:
\begin{align*}
    \pi_1&:\C^n\rightarrow \C^{n-1}, &\pi_1(x_1,\dots, x_n)&:=(x_2,\dots,x_n), \\ 
    \pi_2&:\C^n\rightarrow \C^{n-1},\quad &\pi_2(x_1,\dots, x_n)&:=(x_1,x_3,\dots,x_n), \\ 
    \pi_{1,2}&:\C^n\rightarrow \C^{n-2},\quad &\pi_{1,2}(x_1,\dots, x_n)&:=(x_3,\dots,x_n). 
\end{align*}

\subsection{Smooth maps on locally closed sets}\label{sec:pre:smooth}
Let $U\subset \R^n$ be an open set. In this paper, the space $\mathcal C^\infty(U,\R^m)$ of smooth maps from $U$ into $\R^m$ is always considered with the weak topology, i.e. the topology of locally uniform convergence with derivatives up to arbitrarily high order (see \cite[p. 34]{hirschDifferentialTopology1976}). More generally let $T\subset \R^n$ be a locally closed subset of $\R^n$ and let $U$ be any open neighbourhood of $T$ with the property that $T$ is closed in $U$. By $\mathcal{C}^\infty(T,\R^m)$ we denote the set of all maps from $T$ into $\R^m$, which arise as restrictions of maps from $\mathcal{C}^\infty(U,\R^m)$ and we endow $\mathcal C^\infty(T,\R^m)$ with the quotient topology induced by the surjection $\mathcal{C}^\infty(U,\R^m)\rightarrow \mathcal{C}^\infty(T,\R^m)$. As in the proof of \cite[Claim 2.5]{bilskiApproximationPiecewiseregularMaps2020} one verifies easily that this definition does not depend on the choice of $U$. If $T\subset \R^n$ happens to be a smooth submanifold, then it is also clear that this definition is equivalent to the usual one. See \cite[Section 2.3.1]{bilskiApproximationPiecewiseregularMaps2020} for a different way to define this topology on $\mathcal C^\infty(T,\R^m)$ in the case that $T$ is compact, and for its basic properties. 

We will make use of the following fact:
\begin{obs}
Let $n,m\geq 0$ and let $T\subset \R^n$ be a compact set. Then the space of all smooth embeddings $\Emb(T,\R^m)$ of $T$ in $\R^m$, i.e. the space of all injective smooth functions $f\in \mathcal C^\infty(T,\R^m)$ whose inverses $f^{-1}:f(T)\rightarrow \R^n$ satisfy $f^{-1}\in \mathcal C^\infty(f(T),\R^n)$, is open in the $\mathcal C^\infty$-topology.
\end{obs}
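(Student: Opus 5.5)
Since $T$ is compact, I will reduce everything to statements about smooth maps defined on a neighbourhood of $T$. Fix $f\in\Emb(T,\R^m)$ and an extension $F\in\mathcal C^\infty(U,\R^m)$ of $f$. Let $G\in\mathcal C^\infty(V,\R^n)$ be an extension of $f^{-1}$ to an open neighbourhood $V$ of $f(T)$ in which $f(T)$ is closed. The identity $G\circ F=\id$ holds on $T$, but not necessarily off $T$. The first point is that $T$ need not be a manifold, so $dF$ need not be injective anywhere in particular. Still, $G\circ F$ is smooth on $F^{-1}(V)$ and agrees with $\id$ on $T$. From this I will not conclude that $dF$ is injective at points of $T$ in all directions: differentiating $G\circ F=\id$ along $T$ only sees tangent directions of $T$. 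Instead I will use a cleaner route and replace $F$ by a genuine embedding of a neighbourhood. Consider
\[
\Phi:U\cap F^{-1}(V)\to \R^m\times\R^n,\qquad \Phi(x)=\bigl(F(x),\,x-G(F(x))\bigr).
\]
On $T$ we have $\Phi(x)=(f(x),0)$. Moreover $d\Phi_x(v)=(dF_x v,\; v-dG\,dF_x v)$, which is injective: if $dF_xv=0$ then the second component is $v$. Hence $\Phi$ is an immersion everywhere on its domain. It is also injective on $T$, so by compactness it is an embedding of some neighbourhood $U'$ of $T$.

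Now let $f'$ be close to $f$ in $\mathcal C^\infty(T,\R^m)$. By the definition of the quotient topology, $f'$ has an extension $F'$ close to $F$ in $\mathcal C^\infty(U,\R^m)$; if we shrink $U'$ to a relatively compact neighbourhood of $T$, closeness on its closure suffices. Set $\Phi'(x)=(F'(x),x-G(F'(x)))$. For $F'$ sufficiently $\mathcal C^1$-close to $F$ on $\overline{U''}$, with $U''\Subset U'$ a compact neighbourhood of $T$, the map $\Phi'$ is $\mathcal C^1$-close to $\Phi$. The standard openness of embeddings of compact manifolds with boundary, or directly the argument that a $\mathcal C^1$-small perturbation of an injective immersion of a compact set stays injective (Hirsch, Ch.~2, Thm.~1.4), then shows that $\Phi'$ is an embedding of $U''$. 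The main obstacle lies here: one needs injectivity of $f'$ on $T$, which does \emph{not} follow from $\Phi'$ being injective, because the second component of $\Phi'$ is not seen by $f'$. I will instead prove injectivity of $F'$ itself near $T$ by contradiction. Suppose $x_k\neq y_k$ in $T$ with $F_k'(x_k)=F_k'(y_k)$ and $F_k'\to F$ in $\mathcal C^1$. Pass to limits $x,y$. If $x\neq y$, then $f(x)=f(y)$, a contradiction. If $x=y$, then $G\circ F_k'$ is close to $G\circ F$, and
\[
|x_k-y_k|=|(x_k-G F(x_k))-(y_k-GF(y_k))|\le \|d(\id-GF)-d(\id-GF'_k)\|\,|x_k-y_k|.
\]
This uses $GF'_k(x_k)=GF'_k(y_k)$ and $\id-GF=0$ on $T$, together with a chord estimate on a convex ball around $x$. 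The last norm tends to $0$, which gives a contradiction.

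Finally, smoothness of the inverse. Since $f'$ is injective on $T$, the map $\Phi'$ embeds a neighbourhood of $T$. Its inverse $\Psi'$ is a smooth map on an open set $W'\supset\Phi'(U'')$ (extend the inverse of the embedding to a tubular neighbourhood). Then define $(f')^{-1}:f'(T)\to\R^n$ by $z\mapsto$ the unique $x\in T$ with $f'(x)=z$. To exhibit it as a restriction of a smooth map near $f'(T)$, note that for $x\in T$ and $z=f'(x)$ we have $\Phi'(x)=(z,x-G(z))$, while $x=\Psi'(z,\,x-G(z))$ is implicit. So I will instead use $H(z):=$ the solution $x$ of $x=G(z)+\bigl(x-G(F'(x))\bigr)+\bigl(G(F'(x))-G(z)\bigr)$. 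A cleaner option is to define
\[
H:=\pi\circ (F'|_{\text{a slice}})^{-1},
\]
or simply to apply the inverse function theorem to $x\mapsto F'(x)$ restricted to the image of the smooth map $z\mapsto G(z)$. Indeed, $F'\circ G$ is $\mathcal C^1$-close to $F\circ G$, which equals $\id$ on $f(T)$. I expect this last bookkeeping (choosing the right auxiliary map whose inverse restricts to $(f')^{-1}$) to be the fiddliest step. The fallback is to take $H:=G\circ(F'\circ G)^{-1}$ on a neighbourhood of $f'(T)$, after checking that $F'\circ G$ is a local diffeomorphism near $f(T)$ when $F'$ is close to $F$. Verifying that $H$ agrees with $(f')^{-1}$ on $f'(T)$ would then require $G\circ f'$ to map $T$ back into $T$, which fails in general, so some correction via $\Phi'$ will likely be needed.
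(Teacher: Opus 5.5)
Your injectivity argument is correct in substance, though the displayed identity has a slip. With $GF$ written there, both brackets vanish on $T$. What you mean is $x_k-y_k=(\id-GF'_k)(x_k)-(\id-GF'_k)(y_k)$, from which you then subtract the zero term $(\id-GF)(x_k)-(\id-GF)(y_k)$ before the chord estimate.

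The genuine gap is the other half of the definition of $\Emb(T,\R^m)$. You never show that $(f')^{-1}$ is the restriction of a smooth map defined on a neighbourhood of $f'(T)$. As it stands, you have only proved that maps near $f$ are injective on $T$, and, as you noticed, $\Phi'$ does not supply the inverse.

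Your fallback $H=G\circ(F'\circ G)^{-1}$ fails, but not where you expect. Agreement with $(f')^{-1}$ is automatic: $F'(G(f(x)))=F'(x)=f'(x)$ for $x\in T$. So once $F'\circ G$ is injective near $f(T)$, you get $H(f'(x))=G(f(x))=x$, with no need for $G\circ f'$ to preserve $T$. What breaks is the step you postponed. $F\circ G$ is the identity only on $f(T)$, not on a neighbourhood of it, so $F'\circ G$ need not be a local diffeomorphism. For $m>n$ it never is, whatever $G$ you choose, because $d(F'\circ G)=dF'\circ dG$ has rank at most $n<m$.

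The missing idea is to use an extension of $f'\circ f^{-1}$ that is close to the identity of $\R^m$ on a whole neighbourhood of $f(T)$. Choose a compact neighbourhood $K\subset V$ of $f(T)$ with $G(K)\subset U$, and a smooth $\chi$ equal to $1$ near $f(T)$ with support in $K$. Set
\[
\Theta':=\id_{\R^m}+\chi\cdot\bigl((F'-F)\circ G\bigr).
\]
Then:
\begin{enumerate}
\item Since $G\circ f=\id_T$, you get $\Theta'(f(x))=f'(x)$ for $x\in T$.
\item If $F'$ is $\mathcal C^1$-close to $F$ on $G(K)$, then $\Vert d\Theta'_y-\id\Vert\leq\tfrac12$ for all $y\in\R^m$. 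Hence $\Theta'$ is injective and is a diffeomorphism of $\R^m$ onto an open set.
\item Consequently $f'=\Theta'\circ f$ is injective on $T$.
\item $G\circ(\Theta')^{-1}$, defined on the open set $\Theta'(\Int K)\supset f'(T)$, is a smooth extension of $(f')^{-1}$.
\end{enumerate}
With this, the maps $\Phi,\Phi'$ and the separate chord estimate become unnecessary.

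This is exactly the paper's argument unwrapped. The paper first treats $f=\id_T$, where source and target dimensions agree. There an extension close to the identity on a ball $B\supset T$ is an embedding of $B$, using openness of the restriction map $\mathcal C^\infty(B,\R^n)\to\mathcal C^\infty(T,\R^n)$. It then handles a general $f$ by writing $g=(g\circ f^{-1})\circ f$, with $g\circ f^{-1}$ close to $\id_{f(T)}$.
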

\begin{proof}
Let $f\in \mathcal C^\infty(T,\R^m)$ be an embedding. We will show that the set $\Emb(T,\R^m)$ contains a neighbourhood of $f$ in $\mathcal C^\infty(T,\R^m)$.

Assume first that $n=m$ and that $f$ is the identity. Let $B_n\subset \R^n$ be a large closed ball containing $T$. Since $B_n$ is a smooth manifold with boundary, it is well known that the space $\Emb(B_n,\R^n)$ of smooth embeddings of $B_n$ into $\R^n$ is open in the $\mathcal C^\infty$-topology (\cite[p. 37, Theorem 1.4]{hirschDifferentialTopology1976}). By the proof of \cite[Claim 2.3]{bilskiApproximationPiecewiseregularMaps2020} we have that the linear surjection
\begin{equation*}
    \mathcal C^\infty(B_n,\R^n)\rightarrow \mathcal C^\infty(T,\R^n)
\end{equation*}
is an open map. Therefore, it maps the set $\Emb(B_n,\R^n)$ to a neighbourhood of the identity in $\mathcal C^\infty(T,\R^n)$. Since the restriction of an embedding is an embedding, we have that this neighbourhood is contained in $\Emb(T,\R^n)$.

Let us now consider the general case. Let $g\in \mathcal C^\infty(T,\R^m)$ be sufficiently close to $f$ in the $\mathcal C^\infty$-topology. By the definition of the $\mathcal C^\infty$-topology we then have that $g\circ f^{-1}$ is close to $f\circ f^{-1}=\id_{f(T)}$ in the $\mathcal C^\infty$-topology. By the already covered case we now know that $g\circ f^{-1}$ is an embedding, so $g=(g\circ f^{-1})\circ f$, being a composition of two embeddings, is an embedding as well.
\end{proof}

Given a smooth compact submanifold $M\subset \R^n$ and a family $\mathcal A$ of subsets of $\R^n$, we say that $M$ can be approximated by elements of $\mathcal A$ if for every neighbourhood $\mathcal U$ of the inclusion $i:M\hookrightarrow \R^n$ in the $\mathcal C^{\infty}$-topology there is a map $j\in \mathcal U$ such that $j(M)\in \mathcal A$. Note that if $\mathcal U$ is chosen path connected and sufficiently small so that it consists of embeddings then $j(M)$ has to be a smooth compact submanifold of $\R^n$ isotopic to $M$. 
\subsection{Preliminary results}
The proof of Theorem \ref{thm:main}, included in the next section, will be based on some results of Akbulut and King which we recall in the rest of this section.

The following lemma is a version of a result from \cite{akbulutApproximatingSubmanifoldsAlgebraic1992}. We find the proof given there somewhat vague, so we provide a more detailed one below:
\begin{lem}[{\cite[Theorem 8]{akbulutApproximatingSubmanifoldsAlgebraic1992}}]\label{lem:functionApproxAK}
Let $\theta:\C^m\rightarrow \C^n$ be a polynomial map defined over $\R$. Let $T$ be a compact subset of $\theta^{-1}(\R^n)$ invariant under complex conjugation and let $\psi:T\rightarrow \C$ be a continuous function, which satisfies $\psi(\bar z)=\overline{\psi(z)}$ for $z \in T$. Assume that $\theta\vert_T$ is finite-to-one. Then, there is a polynomial with real coefficients $\tilde \psi:\C^m\rightarrow \C$ such that $\tilde \psi\vert_T$ approximates $\psi$ arbitrarily closely in the $\mathcal C^0$-topology. 

Moreover, suppose that we are given a subset $P\subset T$ open in $T$, invariant under complex conjugation, and such that $\psi\vert_P$ is of class $\mathcal C^\infty$ as a map into $\C\cong \R^2$ and can be locally approximated in the $\mathcal C^\infty$-topology by restrictions of polynomials with real coefficients. Then, we may also assume that the map $\tilde \psi\vert_P$ is a $\mathcal C^\infty$-approximation of $\psi\vert_P$.
\end{lem}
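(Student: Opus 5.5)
The plan is a localisation argument over the compact set $\theta(T)\subset\R^n$. Locally over a point of $\theta(T)$ I approximate $\psi$ by a single real polynomial that interpolates it on the finite fibre. I then glue these local approximations using a partition of unity pulled back by $\theta$. The point that makes this work is that $\theta(T)\subset\R^n$: for a real-valued continuous function $\rho$ on $\R^n$, the function $\rho\circ\theta$ is real-valued on $T$. By the Weierstrass theorem on a compact neighbourhood of $\theta(T)$ in $\R^n$, $\rho$ is approximated by real polynomials $r$, and then $r\circ\theta$ is a real polynomial on $\C^m$ approximating $\rho\circ\theta$ on $T$. If $\rho$ is smooth, the approximation of $\rho$ can be taken in $\mathcal C^\infty$ on that compact set, so $r\circ\theta$ approximates $\rho\circ\theta$ in $\mathcal C^\infty$ near $T$. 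Thus functions constant on the fibres of $\theta|_T$ are available for free, and the whole task is to separate the finitely many points within a fibre.

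\textbf{Local step ($\mathcal C^0$).} Fix $y\in\theta(T)$. The fibre $F_y=\theta^{-1}(y)\cap T=\{z_1,\dots,z_k\}$ is finite and, because $\theta$ is defined over $\R$ and $y$ is real, invariant under conjugation.
\begin{enumerate}
\item Since $\psi(\bar z)=\overline{\psi(z)}$, Lagrange interpolation gives a polynomial $q_y$ with $q_y(z_i)=\psi(z_i)$. Replacing $q_y(z)$ by $\frac12\bigl(q_y(z)+\overline{q_y(\bar z)}\bigr)$ makes its coefficients real and keeps the interpolation property.
\item By compactness of $T$, for every neighbourhood $N$ of $F_y$ there is a neighbourhood $V_y$ of $y$ with $T\cap\theta^{-1}(V_y)\subset N$. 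Hence, by continuity of $\psi-q_y$, we get $|q_y-\psi|<\varepsilon$ on $T\cap\theta^{-1}(V_y)$ for $V_y$ small.
\item Cover $\theta(T)$ by finitely many $V_{y_1},\dots,V_{y_s}$ and take a smooth partition of unity $\rho_1,\dots,\rho_s$ on a neighbourhood of $\theta(T)$ subordinate to this cover.
\item Approximate each $\rho_i$ by a real polynomial $r_i$ and set $\tilde\psi=\sum_i (r_i\circ\theta)\,q_{y_i}$.
\end{enumerate}
The error on $T$ is at most $\varepsilon+\sum_i\sup_T|r_i\circ\theta-\rho_i\circ\theta|\cdot\sup_T|q_{y_i}|$. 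This is small once the $q_{y_i}$ are fixed and the $r_i$ are then chosen close enough.

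\textbf{The $\mathcal C^\infty$ refinement.} The same scheme works, but $q_y$ must now be $\mathcal C^\infty$-close to $\psi$ near every point of $F_y\cap P$, not just equal at the points. This is the main obstacle: the local polynomial approximants $p_i$ near different points $z_i$ of the fibre must be glued into one polynomial without destroying derivative control.
\begin{enumerate}
\item Choose a real polynomial $g:\C^m\to\C$ taking pairwise distinct values on $F_y$; a generic real linear form works.
\item Choose small closed discs $D_i\ni g(z_i)$ that are pairwise disjoint, with the family invariant under conjugation.
\item Since the complement of $\bigcup D_i$ is connected, Runge's theorem gives a one-variable polynomial $e_i$ uniformly close to $1$ on $D_i$ and to $0$ on the other discs. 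By Cauchy estimates, $e_i$ is then $\mathcal C^\infty$-close to these constants on slightly smaller discs.
\item Handle each conjugate pair $\{z_i,\bar z_i\}$ together. Use $e_i+\overline{e_i(\bar{\cdot})}$ when $z_i\neq\bar z_i$, and the symmetrisation $\tfrac12\bigl(e_i+\overline{e_i(\bar{\cdot})}\bigr)$ when $z_i$ is real, so that the coefficients are real.
\item Take the local approximants $p_i$ from the hypothesis at points of $P$, and constants or interpolants at the remaining points, and set $q_y=\sum_i (e_i\circ g)\,p_i$, again symmetrised.
\end{enumerate}
Then $q_y$ is $\mathcal C^\infty$-close to $\psi$ on a neighbourhood of $F_y\cap P$ in $P$, and $\mathcal C^0$-close near the other fibre points.

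\textbf{Gluing in the $\mathcal C^\infty$ setting.} With these $q_y$, the gluing of the $\mathcal C^0$ step goes through verbatim, since $r_i\circ\theta\to\rho_i\circ\theta$ in $\mathcal C^\infty$. On $P$ we have
\[
\tilde\psi-\psi=\sum_i(\rho_i\circ\theta)(q_{y_i}-\psi)+\sum_i\bigl((r_i-\rho_i)\circ\theta\bigr)\,q_{y_i}.
\]
Each term is $\mathcal C^\infty$-small on $P$: the first because the derivatives of $\rho_i\circ\theta$ are fixed and bounded while $q_{y_i}-\psi$ is $\mathcal C^\infty$-small there, and the second by the choice of the $r_i$. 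If needed, I will shrink to compact pieces of $P$, as the topology of Section \ref{sec:pre:smooth} allows.
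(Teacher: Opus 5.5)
Your proposal is correct and follows essentially the paper's proof: you localise over $\theta(T)$, separate each finite fibre by a generic real linear form, use Runge/Mergelyan plus Cauchy estimates to build polynomial ``bump'' functions on disjoint discs, multiply them by local real-polynomial approximants, symmetrise, and glue with polynomial approximations of a partition of unity composed with $\theta$; your Lagrange-interpolation shortcut for the $\mathcal C^0$ part is only a cosmetic simplification. The point you leave vague (``shrink to compact pieces of $P$'') is needed and is handled in the paper as you anticipate: a compact conjugation-invariant $K\subset P$ is fixed before the cover is chosen, and the cover is taken so that the pieces around fibre points in $K$ lie in $P$ while those around fibre points outside $K$ miss $K$, which is what makes $q_{y_i}-\psi$ genuinely $\mathcal C^\infty$-small on $K\cap\theta^{-1}(V_{y_i})$.
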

\begin{proof}
Choose an arbitrary compact set $K\subset P$, which is invariant under complex conjugation. By the definition of the $\mathcal C^\infty$-topology on $\mathcal C^\infty(P)$, it suffices to show that there is a real polynomial $\tilde \psi$ such that $\tilde \psi\vert_T$ approximates $\psi$ arbitrarily closely in the $\mathcal C^0$-topology and $\tilde \psi\vert_K$ approximates $\psi\vert_K$ arbitrarily closely in the $\mathcal C^\infty$-topology.

Fix a point $x\in \theta(T)$ and a small open neighbourhood $x\in U_x\subset \R^n$. Let $A_x:=(\theta\vert_T)^{-1}(x)$ be the fibre over $x$. Since $\theta\vert_T$ is proper and $A_x$ is a finite discrete set we may assume that $(\theta\vert_T)^{-1}(U_x)$ splits as $U=\bigsqcup_{a\in A_x} U_x^a$, where $U_x^a$ is a neighbourhood of $a$ in $T$ and $U_x^{\bar a}=\overline{U_x^a}$ for $a \in A_x$. We may also assume that $U_x$ is so small that for every $a\in A_x$, if $a\in K$ then $U_x^a\subset P$ and $U_x^a\cap K=\emptyset$ otherwise. Take a generic projection $\pi:\C^m\rightarrow \C$ defined over $\R$. We may assume that it is injective on $A_x$, and that $U_x$ is so small that each set $\pi(U_x^a)$ is contained in the interior of a small closed ball $B_a$ centred at $\pi(a)$, such that the balls $(B_a)_{a\in A_x}$ are disjoint and satisfy $B_{\bar a}=\overline{B_a}$ for $a\in A_x$. Let $B^x:=\bigsqcup_{a\in A_x}B_a$.

Fix any $a\in A_x\backslash K$. Let $f_a: B^x\rightarrow \C$ be given by 
\begin{equation*}
    f_a(z):=\begin{cases}
        \psi(a)& \text{if }z\in B_a \\
        \psi(\bar a)& \text{if }z\in B_{\bar a} \\
        0 & \text{otherwise}.
    \end{cases}
\end{equation*}
Applying Mergelyan's theorem (\cite[Theorem 20.5]{rudinRealComplexAnalysis1974}) we find a complex polynomial $\tilde f_a\in \C[t]$, such that $\tilde f_a\vert_{B^x}$ is close to $f_a\vert_{B^x}$ in the $\mathcal C^0$-topology. Using Cauchy's estimate we then deduce that $\tilde f_a\vert_{\Int B^x}$ is close to $f_a\vert_{\Int B^x}$ in the $\mathcal C^\infty$-topology. After considering $\frac{1}{2}\left(\tilde f_a(t)+\overline{\tilde f_a(\bar t)}\right)$ instead, we can assume that $\tilde f_a$ is defined over $\R$. Define the polynomial $g_a\in \R[x_1,\dots,x_m]$ as $g_a:=\tilde f_a\circ \pi$.

Fix now any $a\in A_x\cap K$. Let $f_a:B^x\rightarrow \C$ this time be given by 
\begin{equation*}
    f_a(z):=\begin{cases}
        1 & \text{if }z\in B_a\cup B_{\bar a} \\
        0 & \text{otherwise}.
    \end{cases}
\end{equation*}
Applying Mergelyan's theorem we again find a polynomial $\tilde f_a\in \R[t]$, such that $\tilde f_a\vert_{\Int B^x}$ is close to $f_a\vert_{\Int B^x}$ in the $\mathcal C^\infty$-topology. If $U_x$ is small enough, then by assumption $\psi\vert_{U_x^a}$ can be approximated by restrictions of polynomials with real coefficients. Choose such a polynomial $h_a\in \R[x_1,\dots,x_m]$. Then define $g_a:=h_a\cdot(\tilde f_a\circ \pi)$.

Let $\eta_x\in \R[x_1,\dots,x_m]$ be given by
\begin{equation*}
    \eta_x:=\sum_{a\in A'_x}g_a,
\end{equation*}
where $A'_x\subset A_x$ is any subset such that $|\{a,\bar a\}\cap A'_x|=1$ for all $a\in A_x$.

Choose a covering of $\theta(T)$ by sets of the form $U_x$ for $x\in I$, where $I\subset \theta(T)$ is a finite set. Choose a smooth partition of unity subordinate to this covering, i.e. a family of functions $\big(\varphi_x:\R^n\rightarrow [0,1]\big)_{x\in I}$ such that $\supp \varphi_x \subset U_x$ and $\sum_{x\in I}\varphi_x\equiv 1$ in a neighbourhood of $\theta(T)$. Using the Stone-Weierstrass theorem with derivatives (see \cite[Theorem 8.8.5]{bochnakRealAlgebraicGeometry1998}) we find real polynomials $\tilde \varphi_x\in \R[y_1,\dots,y_n]$ which are $\mathcal C^\infty$-close to the respective $\varphi_x$ near $\theta(T)$ in $\R^n$. Finally, define
\begin{equation*}
    \tilde \psi(z):=\sum_{x\in I} \tilde \varphi_x(\theta(z))\eta_x(z).
\end{equation*}
If the sets $U_x$ for $x\in \theta(T)$ were chosen small enough, and the finitely many approximating functions 
\begin{equation*}
    (\tilde \varphi_x)_{x\in I},(\tilde f_a)_{a\in A_x,x\in I},(h_a)_{a\in A_x\cap K,x\in I}
\end{equation*}
are chosen sufficiently well, then $\tilde \psi\vert_T$ is $\mathcal C^0$-close to $\psi$. Moreover, once we fix the covering $(U_x)_{x\in I}$, if the approximating functions are chosen well enough then $\tilde \psi\vert_K$ is $\mathcal C^\infty$-close to $\psi\vert_K$ as well.
\end{proof}

We will also make use of the following result from \cite{akbulutApproximatingSubmanifoldsAlgebraic1992}:
\begin{lem}[{\cite[Lemma 10]{akbulutApproximatingSubmanifoldsAlgebraic1992}}]\label{lem:immersion+injection=>nonsingular}
Let $X\subset \R^n$ be a real algebraic set. Let $f:X\rightarrow \R^m$ be a polynomial map and define $Y:=\overline{f(X)}^\Zar\subset \R^m$. Assume that the following conditions hold true:
\begin{enumerate}
    \item the complexification $f_\C:X_\C\rightarrow \C^m$ is a finite map,
    \item the set $X^\ns$ is compact and $f\vert_{X^\ns}$ is an embedding of $X^{\ns}$ as a smooth manifold,
    \item $(f_\C)^{-1}(f(X^\ns))=X^{\ns}$.
\end{enumerate}
Then $Y^{\ns}=f(X^\ns)$.
\end{lem}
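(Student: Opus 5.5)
Here $Y=\overline{f(X)}^\Zar$, and the goal is $Y^\ns=f(X^\ns)$. I would prove the two inclusions separately, working mostly on the complex side with $Y_\C$ and the finite map $f_\C:X_\C\to\C^m$.

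\textbf{Setup.} Since $f_\C$ is finite, it is proper and hence Zariski closed. Moreover $X_\C$ is the Zariski closure of $X$, so $f_\C(X_\C)$ is a complex algebraic set defined over $\R$ that contains $f(X)$. Also $f_\C(X_\C)\subset \overline{f_\C(X)}^{\Zar}=\overline{f(X)}^{\Zar,\C}=Y_\C$. Together these give $Y_\C=f_\C(X_\C)$. Because $f_\C$ is finite, $\dim Y_\C=\dim X_\C$. In particular $\dim Y=\dim X$, and this equals the dimension $d$ of the manifold $X^\ns$, at least on the components of $X_\C$ meeting $X^\ns$; I would handle this equidimensionality point carefully.

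\textbf{Inclusion $f(X^\ns)\subset Y^\ns$.} Fix $x\in X^\ns$ and put $y=f(x)$. By (3), the fibre $f_\C^{-1}(y)$ equals $f_\C^{-1}(y)\cap X^\ns$, and by injectivity in (2) this is just $\{x\}$. Since $f_\C$ is proper, a small neighbourhood $V$ of $y$ in $\C^m$ satisfies $f_\C^{-1}(V)\subset U$, where $U$ is a small neighbourhood of $x$ in $X_\C$ on which $X_\C$ is a complex manifold of dimension $d$. Hence $Y_\C\cap V=f_\C(U)\cap V$. Now $f$ is an immersion at $x$ on the real manifold $X^\ns$, and $f_\C$ is the holomorphic extension of $f$. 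Since $X^\ns$ is a totally real $d$-dimensional slice of the $d$-dimensional complex manifold $U$, the complex differential $d(f_\C)_x$ is the complexification of $df_x$, so it is injective. Therefore $f_\C|_U$ is a holomorphic embedding near $x$, and $Y_\C\cap V$ is the germ of a smooth complex manifold of dimension $d=\dim Y_\C$. This shows $y\in Y^\ns$.

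\textbf{Inclusion $Y^\ns\subset f(X^\ns)$.} This is the step I expect to be the main obstacle. Take $y\in Y^\ns$; then $Y$ near $y$ is a real analytic manifold of dimension $\dim Y=d$. I would first show that $f(X^\ns)$ is open and closed in $Y^\ns$. It is closed because $X^\ns$ is compact. It is open because it is a $d$-dimensional submanifold, by the previous paragraph, inside the $d$-manifold $Y^\ns$, so invariance of domain applies. Consequently $f(X^\ns)$ is a union of connected components of $Y^\ns$.

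It remains to rule out a component $C$ of $Y^\ns$ disjoint from $f(X^\ns)$. For $y\in C$, surjectivity of $f_\C$ gives points of $X_\C$ over $y$. The difficulty is that these points could be non-real, or could lie in $X^\sing$, with no contradiction yet. My plan is to exploit that $Y_\C$ is smooth of dimension $d$ near $C$. I would argue that $Y_\C$ is then the Zariski closure of $f(X)$ locally, and count dimensions: $f(X^\sing)$ has dimension $<d$, so $f(X)\cap C$ has a dense part in $f(X^\ns)$ and is nowhere dense otherwise. Then use that $Y$ is the Zariski closure of $f(X)$, so every component of $Y$ of dimension $d$ containing points of $C$ must contain a $d$-dimensional piece of $f(X)$, and that piece can only come from $f(X^\ns)$.

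Making this rigorous requires knowing that $C$ lies in the closure of $f(X^\ns)$. If this fails, I would try to fall back on the real analytic irreducible components of the germ at $y$, together with hypothesis (3), applied near the boundary of $f(X^\ns)$ in $Y$. Alternatively I would check whether the intended statement should be read as a statement about $f(X)$ near $f(X^\ns)$ only.
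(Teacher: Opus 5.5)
The paper does not prove this lemma; it quotes it from Akbulut--King. So I judge your argument on its own merits. Your setup ($Y_\C=f_\C(X_\C)$, $\dim Y_\C=\dim X_\C$) and the inclusion $f(X^{\ns})\subset Y^{\ns}$ are correct. The only thing to add there is that after shrinking $U$ to where $f_\C$ is an embedding, you re-choose $V$ with $f_\C^{-1}(V)$ inside the smaller $U$.

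\textbf{The gap is the reverse inclusion $Y^{\ns}\subset f(X^{\ns})$, which you do not prove.} The open-and-closed step is fine, but it only reduces the problem to excluding a connected component $C$ of $Y^{\ns}$ disjoint from $f(X^{\ns})$, and your proposed way of doing that cannot work. Zariski closures and dimension counts cannot distinguish connected components of a real locus. The irreducible component $Y'$ of $Y_\C$ through $C$ may well contain a $d$-dimensional piece of $f(X^{\ns})$ in a different connected component of $Y'\cap\R^m$, so ``$Y'$ must contain a $d$-dimensional piece of $f(X)$'' gives no contradiction. The requirement you name, that $C$ lie in the closure of $f(X^{\ns})$, can never hold, since $f(X^{\ns})$ is compact and disjoint from $C$. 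What excludes $C$ is a degree argument that uses hypothesis (3), and (3) plays no role in your sketch of this direction.

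\textbf{The missing idea is birationality plus normality.}
\begin{enumerate}
\item Let $y'\in Y^{\ns}$ and let $Y'$ be the unique irreducible component of $Y_\C$ through $y'$. Then $Y'=f_\C(X')$ for some $d$-dimensional irreducible component $X'$ of $X_\C$. This $X'$ is the complexification of a real irreducible component of $X$, so it is conjugation invariant and $X'\cap X^{\ns}$ is Zariski dense in $X'$.
\item By (2) and (3), every point of the Zariski-dense set $f(X'\cap X^{\ns})\subset Y'$ has exactly one preimage under $f_\C$. Hence the finite surjection $f_\C\vert_{X'}\colon X'\to Y'$ has degree one, i.e.\ it is birational.
\item Since $Y'$ is smooth, hence normal, at $y'$, a finite birational morphism onto it is an isomorphism over a Zariski neighbourhood of $y'$. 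Indeed, the localization of $\OO_{X'}$ at $y'$ is a finite extension of the integrally closed ring $\OO_{Y',y'}$ lying inside its fraction field, so the two coincide.
\item So $y'$ has a unique preimage $x'\in X'$. Because $f_\C$ is defined over $\R$ and $X'$ is conjugation invariant, $\bar{x}'$ is also a preimage in $X'$, hence $x'=\bar{x}'\in X$.
\item $X'$ is smooth at $x'$, so its real points near $x'$ form a $d$-manifold. In it, the points lying on no other component of $X_\C$ are dense, and they belong to $X^{\ns}$. Since $X^{\ns}$ is compact, $x'\in X^{\ns}$, and therefore $y'\in f(X^{\ns})$.
\end{enumerate}
This pointwise argument makes your open-and-closed reduction unnecessary.
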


The conclusion of Lemma \ref{lem:immersion+injection=>nonsingular} can be strengthened as explained in the following observation:
\begin{obs}\label{obs:strengthening}
In the conclusion of Lemma \ref{lem:immersion+injection=>nonsingular}, the map $f$ induces a biregular isomorphism between $X^\ns$ and $Y^\ns$.
\end{obs}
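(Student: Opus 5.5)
We must show that $f\vert_{X^\ns}:X^\ns\to Y^\ns$ is a biregular isomorphism. Regularity of $f$ is clear, since $f$ is polynomial, and Lemma \ref{lem:immersion+injection=>nonsingular} gives $f(X^\ns)=Y^\ns$. Hypothesis (2) makes $f\vert_{X^\ns}$ injective, so $f$ is a bijection $X^\ns\to Y^\ns$. It remains to show that the inverse $g:=(f\vert_{X^\ns})^{-1}:Y^\ns\to X^\ns$ is regular. Our plan is to check this locally at every point $y\in Y^\ns$ by working on the complexifications, where finiteness and normality-type arguments are available.

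Fix $y\in Y^\ns$ and let $x:=g(y)$. The first step is to pass to a Zariski open neighbourhood of $y$ in $Y_\C$ over which $f_\C$ is an isomorphism. By hypothesis (3), $(f_\C)^{-1}(y)=\{x\}$. Since $f_\C$ is finite (hypothesis (1)), the image $Z:=f_\C((X_\C)^{\sing}\cup \overline{(X_\C\setminus \{\text{near }x\})})$ needs a more algebraic formulation. Concretely, $f_\C(X_\C)$ is closed, and it equals $Y_\C$ because $Y$ is the Zariski closure of $f(X)$. Consider the finite map $f_\C:X_\C\to Y_\C$ near $y$. Since the fibre over $y$ is a single point $x$ at which $X_\C$ is smooth, and $df_x$ is injective by (2) (after complexification $d(f_\C)_x$ is injective), $f_\C$ is unramified at $x$. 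Finiteness (properness) then gives a Euclidean neighbourhood $V$ of $y$ in $Y_\C$ such that $(f_\C)^{-1}(V)$ is a small neighbourhood of $x$ on which $f_\C$ is a closed holomorphic embedding. Hence $f_\C$ is injective over $V$, and $Y_\C\cap V$ is the smooth image. Note that $y\in Y_\C$ is a smooth point of the same dimension, as the lemma asserts.

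Next we upgrade this to an algebraic statement. The map $f_\C:X_\C\to Y_\C$ is finite, and it is of degree one onto every irreducible component of $Y_\C$ through $y$, because it is injective over the open set $V$. The set $B\subset Y_\C$ of points over which $f_\C$ fails to be an isomorphism (non-injective, or not locally an isomorphism) is Zariski closed. To see this, write $f_\C$ as $\mathrm{Spec}$ of the finite module $(f_\C)_*\OO_{X_\C}$ over $\OO_{Y_\C}$; then $B$ is the support of the cokernel of $\OO_{Y_\C}\to (f_\C)_*\OO_{X_\C}$ together with the support of its kernel, and $y\notin B$ by the previous paragraph. The cokernel statement uses that $Y_\C$ is smooth, hence normal, at $y$ and that a finite birational map onto a normal germ is an isomorphism. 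Taking $U:=Y_\C\setminus B$, the map $f_\C:(f_\C)^{-1}(U)\to U$ is a biregular isomorphism of complex varieties defined over $\R$, since $f$ is real and $B$ is conjugation invariant. Its inverse is therefore given locally by quotients $P/Q$ of polynomials, and averaging with the conjugate makes these real with $Q(y)\ne0$. Restricting to the real points shows that $g$ is regular near $y$.

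The main obstacle is the second step, namely justifying that $y\notin B$, i.e.\ that $(f_\C)_*\OO_{X_\C}$ equals $\OO_{Y_\C}$ at $y$. We plan to argue via the local analytic isomorphism from the first step. The single-point fibre means that the stalk of $(f_\C)_*\OO_{X_\C}$ at $y$ is the local ring $\OO_{X_\C,x}$, whose completion maps isomorphically onto the completion of $\OO_{Y_\C,y}$ by unramifiedness and smoothness. Faithful flatness of completion then gives the isomorphism of local rings.
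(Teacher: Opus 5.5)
Your proof is correct, but the key step uses a different idea from the paper's. Both arguments reduce to showing that the map of local rings $A_{\mathfrak m}\to B_{\mathfrak n}$ (of $Y_\C$ at $y$ and $X_\C$ at $x$) is an isomorphism, and then descending to $\R$ by conjugation.

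The paper proves this purely algebraically. Surjectivity of $\mathfrak m/\mathfrak m^2\to\mathfrak n/\mathfrak n^2$ together with Nakayama gives $\mathfrak mB_{\mathfrak n}=\mathfrak nB_{\mathfrak n}$. Since the fibre over $y$ is $\{x\}$, the ideal $\mathfrak mB$ is $\mathfrak n$-primary, which upgrades this to $\mathfrak mB=\mathfrak n$. Hence $B=A+\mathfrak mB$, and Nakayama for the finite $A$-module $B$ produces $r\in A\setminus\mathfrak m$ with $rB\subset A$, which gives explicit denominators.

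You instead build an analytic local embedding from properness and the inverse function theorem, and deduce that $f_\C$ has degree one over the component of $Y_\C$ through $y$. You then use either ``finite and birational over a normal local ring implies isomorphism'' (the stalk of $(f_\C)_*\OO_{X_\C}$ at $y$ is $\OO_{X_\C,x}$ because the fibre is a single point), or faithful flatness of completion together with the formal inverse function theorem. Either justification suffices.

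What each route buys:
\begin{itemize}
\item The paper's route is more elementary. Its Nakayama argument only needs $d(f_\C)_x$ to be injective and the fibre to be a single point, not normality of $Y_\C$ at $y$.
\item Your route relies on heavier standard facts and on smoothness of $Y_\C$ at $y$, which you re-derive. In exchange, it needs no reduction to irreducible $X$. It also yields a Zariski open set $Y_\C\setminus B$ over which $f_\C$ is an isomorphism, where $B=\supp\mathrm{coker}(\OO_{Y_\C}\to(f_\C)_*\OO_{X_\C})$.
\end{itemize}

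Two small points. First, the sentence defining $Z$ in your first step is a false start that nothing uses, and should be removed. Second, in the descent to $\R$, averaging numerator and denominator separately can make the denominator vanish at $y$ (for example if $Q(y)$ is purely imaginary). Either rescale so that $Q(y)>0$ first, or write the inverse as $(P\bar Q+\bar P Q)/(2Q\bar Q)$ with $\bar P(z):=\overline{P(\bar z)}$.
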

\begin{proof}
We may assume that $X$ is irreducible; the general case then follows by applying the conclusion to each of the irreducible components of $X$ separately. Fix a point $x\in X^\ns$ and let $y:=f(x)$. By the conclusion of Lemma \ref{lem:immersion+injection=>nonsingular} we know that $y$ is a nonsingular point of $Y$. Let $A:=\OO_{Y_\C}$ and $B:=\OO_{X_\C}$ denote the coordinate rings of $Y_\C$ and $X_\C$ respectively. Denote by $\mathfrak m\subset A$ and $\mathfrak n\subset B$ the maximal ideals of $A$ and $B$ corresponding to the points $y$ and $x$ respectively.  We will show that $f_\C$ induces an isomorphism between the local rings $A_{\mathfrak m}$ and $B_{\mathfrak n}$. From this the conclusion will follow, since the local regular inverse of $f_\C$, if exists, must be defined over $\R$.

Since $f_\C$ is a dominant map, the morphism $f_\C^\ast:A\rightarrow B$ is injective. It allows us to identify $A$ with a subring of $B$. In turn the induced map between the local rings $A_{\mathfrak m}$ and $B_{\mathfrak n}$ is injective too, so it remains to show that it is surjective.
\begin{claim*}
We have $\mathfrak mB=\mathfrak n$.
\end{claim*}
\begin{proof}
Obviously $\mathfrak mB\subset\mathfrak n$, so it suffices to prove the other inclusion.

The map $f$ is an immersion at $x$, so its derivative is of rank $\dim X$. Hence, the derivative of $f_\C$ at $x$ considered as a linear map over $\C$ is also of rank $\dim X_\C=\dim X$, where by $\dim X_\C$ we mean its complex dimension. It follows that it induces an isomorphism between the tangent spaces to $X_\C$ at $x$ and to $Y_\C$ at $y$. From this we deduce that
\begin{equation*}
    \mathfrak nB_{\mathfrak n}=\mathfrak m B_{\mathfrak n}+\mathfrak n^2B_{\mathfrak n}.
\end{equation*}
Using Nakayama's lemma we find that in fact $\mathfrak nB_{\mathfrak n}=\mathfrak m B_{\mathfrak n}$. This means that there is an element $u\in B,u\not \in \mathfrak n$ such that 
\begin{equation}\label{eq:obsProof1}
    u\mathfrak n\subset \mathfrak mB.    
\end{equation}

On the other hand, by assumption, $(f_\C)^{-1}(f(X^\ns))=X^\ns$, and $f|_{X^\ns}$ is
injective. Hence $x$ is the only point of $X_\C$ lying in the fibre over $y=f(x)$, so the radical of $\mathfrak mB$ is $\mathfrak n$. Since $\mathfrak n$ is a maximal ideal this means that $\mathfrak m B$ is $\mathfrak n$-primary. From \eqref{eq:obsProof1} we deduce that $\mathfrak n\subset \mathfrak m B$, which finishes the proof of the claim.
\end{proof}
Since the map $f_\C$ is finite by assumption, it turns $B$ into a finite $A$-module. From the claim we infer that
\begin{equation*}
    B=A+\mathfrak m B\text{ as an $A$-module.}
\end{equation*}
Indeed, for every \(b\in B\), we have \(b-b(x)\in\mathfrak n=\mathfrak mB\), while \(b(x)\in\C\subset A\). Invoking Nakayama's lemma once more we find an element $r\in A$ such that $r\not \in \mathfrak m$ and $rB\subset A$. Now, if $b\in B_{\mathfrak n}$ is written as $b=\frac{p}{q}$ with $p,q\in B$ and $q\not \in \mathfrak n$ then it may be alternatively written as
\begin{equation*}
    b=\frac{p}{q}=\frac{pr}{qr},
\end{equation*}
with $pr,qr\in A$. Moreover, $(qr)(y)=q(x)r(y)\neq 0$, so $qr$ is invertible in $A_\mathfrak m$. Hence $b\in A_\mathfrak m$, which proves that the induced map of local rings is surjective.
\end{proof}

\section{Proof of Theorem \ref{thm:main}}\label{sec:mainProof}
We will gradually make our way towards Theorem \ref{thm:main} via several lemmas. The proofs of Lemmas \ref{lem2}, \ref{lem3} and Theorem \ref{thm:main} are based on a \say{singularities shifting} technique developed by Akbulut and King in their proof of Theorems \ref{thm:AKWeak} and \ref{thm:AKStrong}. Interestingly, in the course of our proof we switch the roles of the first two variables twice, so that we first shift the singularities in the direction of the variable $x_1$, then in the direction of $x_2$, and then we shift them in the direction of $x_1$ again, this time to infinity.

The following lemma is a variant of \cite[Lemma 6]{akbulutApproximatingSubmanifoldsAlgebraic1992}:
\begin{lem}\label{lem1}
Let $W\subset \R^m$ be a nonsingular compact algebraic set. Let $i:W\rightarrow \R^n$ be a smooth map, which embeds $W$ in $\R^n$ as a smooth manifold. Then, there is a polynomial map $j:W\rightarrow \R^n$, which is arbitrarily close to $i$ in the $\mathcal C^\infty$-topology and such that the complexified map $j_\C:W_\C\rightarrow \C^n$ is finite.
\end{lem}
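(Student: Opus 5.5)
Since $W$ is compact, the Stone--Weierstrass theorem with derivatives gives a polynomial map $p:\R^m\rightarrow\R^n$ whose restriction to $W$ is $\mathcal C^\infty$-close to $i$ (extend $i$ smoothly to a neighbourhood and approximate there). The issue is only finiteness of the complexification, which a generic approximation need not have. I would fix this by a perturbation that is small on the compact set $W$ but controls the behaviour at infinity on $W_\C$.

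\emph{Choice of perturbation.} Let $x_1,\dots,x_m$ be the coordinates of $\R^m$ and let $d$ be a large integer with $d>\deg p$. For a small $\varepsilon>0$ and a generic real linear map $L:\R^m\rightarrow\R^n$ we set
\begin{equation*}
    j:=p+\varepsilon\,\big(\lambda_1^{d},\dots,\lambda_n^{d}\big),
\end{equation*}
where $\lambda_k=L_k(x)$ are the coordinate functions of $L$, restricted to $W$. Since $W$ is compact, the correction term tends to $0$ in the $\mathcal C^\infty$-topology on $W$ as $\varepsilon\to 0$, so $j$ is close to $i$.

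\emph{Finiteness.} A polynomial map $W_\C\rightarrow\C^n$ is finite iff it is proper, equivalently iff each coordinate function $x_l\vert_{W_\C}$ is integral over the subring generated by the components of $j_\C$. I would check properness via the projective closure. Let $\overline{W_\C}\subset\C P^m$ and let $H_\infty$ be the hyperplane at infinity. Then $j_\C$ is proper iff no point at infinity of $\overline{W_\C}$ is a limit of a sequence in $W_\C$ along which $j_\C$ stays bounded. Since $d>\deg p$, the degree-$d$ homogeneous part of $j$ is $\varepsilon(\lambda_1^d,\dots,\lambda_n^d)$. So it suffices that the linear forms $\lambda_1,\dots,\lambda_n$ have no common zero on $\overline{W_\C}\cap H_\infty$, a projective set of dimension $\dim W-1\le n-3<n$ by the standing hypothesis $n\ge\dim W+2$; in any case $n>\dim W$ suffices for this step. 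For generic $L$, $n$ generic hyperplanes miss a projective set of dimension $<n$. Then along any sequence tending to infinity in $W_\C$, some $\lambda_k^d$ dominates all lower-degree terms, so $j_{\C,k}\to\infty$. This gives properness, and by the fact recalled in Section \ref{sec:pre} properness is equivalent to finiteness. The case $\dim W\ge n$ cannot occur, because $W$ embeds in $\R^n$ and is compact. If $\dim W=n$, then $W$ is a union of components, which is impossible for a compact manifold embedded in $\R^n$ unless $W$ is empty.

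\emph{Main obstacle.} The delicate step is making the domination argument at infinity rigorous. One has to show that if $z_\nu\to\infty$ in $W_\C$ with $z_\nu/|z_\nu|\to\zeta$, where $\zeta$ is a point of the cone over $\overline{W_\C}\cap H_\infty$, then $\lambda_k(\zeta)\neq0$ for some $k$. It then follows that $|\lambda_k(z_\nu)|^d\gtrsim|z_\nu|^d$ dominates $|p_k(z_\nu)|=O(|z_\nu|^{d-1})$. This is routine homogeneous-degree bookkeeping. The genericity of $L$ is a dimension count in the Grassmannian, and I would make it explicit by choosing $\lambda_1,\dots,\lambda_n$ successively so that each cuts down the dimension of the common zero locus at infinity.
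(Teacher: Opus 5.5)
Your argument is correct, and it takes a different route from the paper's.

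\textbf{The paper's route.} It never examines behaviour at infinity directly. It forms the graph $V$ of the polynomial approximation $f$ and notes that $g(x)=(x,f(x))$ complexifies to an isomorphism $W_\C\to V_\C$. It then invokes Noether's projection lemma to get a real linear projection $\pi:\C^{m+n}\to\C^n$, close to the projection onto the last $n$ coordinates, that is finite on $V_\C$. So its $j=\pi\circ g$ has the form $x\mapsto Bf(x)+Ax$, with $B$ near the identity and $A$ a small linear map. Finiteness is delegated entirely to Noether normalization, which needs only $\dim V_\C=\dim W\le n$, and the degree does not increase.

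\textbf{Your route.} You keep $p$ and add a small term of degree $d>\deg p$. You then verify properness by hand: the top-degree part $\varepsilon(\lambda_1^d,\dots,\lambda_n^d)$ controls $j_\C$ at infinity. Your condition that the $\lambda_k$ have no common zero on $\overline{W_\C}\cap H_\infty$ says precisely that $L\vert_{W_\C}$ is finite. So in effect you apply Noether's lemma to $W_\C$ itself rather than to the graph, and then show that the componentwise $d$-th power, perturbed by lower-degree terms, stays proper.

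\textbf{Trade-offs.} Your approach is more self-contained and makes the mechanism visible. It costs a higher degree, and it needs the implication ``Euclidean-proper $\Rightarrow$ finite''. That implication follows from the equivalence recalled in Section~\ref{sec:pre} together with the standard fact that a proper morphism of affine varieties is finite; it is true, but not stated in the paper. The paper's approach is a two-line reduction once Noether's lemma is granted.

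\textbf{Two small points.}
\begin{itemize}
\item Your dimension count only needs $n\ge\dim W$, so the discussion of the case $\dim W=n$ is unnecessary.
\item A real generic $L$ exists because the bad set of $L$ is a proper Zariski closed subset of $\C^{nm}$: the incidence variety has dimension at most $(\dim W-1)+nm-n<nm$. Its complement therefore meets $\R^{nm}$.
\end{itemize}
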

\begin{proof}
Thanks to the Stone-Weierstrass theorem, we can find a polynomial map $f:W\rightarrow \R^n$ close to $i$ in the $\mathcal C^\infty$-topology. Let $V=\{(x,y)\in \R^m \times \R^n:y=f(x)\}$ be the graph of $f$ and 
\begin{equation*}
    g:W\rightarrow V,\quad g(x):=(x,f(x)).
\end{equation*}
The complexified map $g_\C:W_\C\rightarrow V_\C$ is an isomorphism, so in particular it is finite. Due to Noether's projection lemma we may find a generic projection $\pi:\C^{m+n}\rightarrow \C^n$ defined over $\R$, close to the standard projection onto the last $n$ coordinates and such that $\pi\vert_{V_\C}$ is finite. Then $j:=\pi\circ g$ satisfies the desired properties.
\end{proof}

The proof of the next lemma is adapted from the proof of \cite[Theorem A]{akbulutApproximatingSubmanifoldsAlgebraic1992}.
\begin{lem}\label{lem2}
Let $W\subset \R^m$ be a nonsingular compact algebraic set. Let $i:W\rightarrow \R^n$ be a smooth map, which embeds $W$ in $\R^n$ as a smooth manifold. Assume that $n\geq \dim W+2$. Then, there is a polynomial map $j:W\rightarrow \R^n$ arbitrarily close to $i$ in the $\mathcal C^\infty$-topology, such that if we define $X:=\overline{j(W)}^\Zar$ then the following properties are satisfied:
\begin{eqenum}
    \item $X^\ns=j(W)$ and $j$ is a biregular isomorphism between $W$ and $X^\ns$, \label{lem2cond1}
    \item the map $\pi_{1,2}\vert_{X_\C}:X_\C\rightarrow \C^{n-2}$ is finite, \label{lem2cond2}
    \item $\pi_2(X^\sing)\cap \pi_2(X^\ns)=\emptyset$. \label{lem2cond3}
\end{eqenum}
\end{lem}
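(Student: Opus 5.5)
I will follow the Akbulut--King singularity-shifting scheme from the proof of Theorem \ref{thm:AKWeak}. First apply Lemma \ref{lem1} to $i$; after a small linear change of coordinates defined over $\R$, I can assume $i$ is a polynomial map and $\pi_{1,2}\circ i_\C:W_\C\rightarrow\C^{n-2}$ is finite. Finiteness of a generic projection to $\C^{n-2}$ survives small perturbations of the coordinate system, and $\dim W\le n-2$ is exactly what makes such a projection possible. I then keep the last $n-2$ coordinates $j_3,\dots,j_n:=i_3,\dots,i_n$ fixed and modify only $j_1,j_2$. Since $X_\C\subset \overline{j_\C(W_\C)}$ and $\pi_{1,2}\circ j_\C=\pi_{1,2}\circ i_\C$ is finite, finiteness of $j_\C$ and condition \eqref{lem2cond2} follow automatically, because $\pi_{1,2}|_{X_\C}$ is proper and $j_\C$ is surjective onto $X_\C$. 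Thus the work is in choosing $j_1,j_2$ so that Lemma \ref{lem:immersion+injection=>nonsingular} and Observation \ref{obs:strengthening} apply, which gives \eqref{lem2cond1}, and so that \eqref{lem2cond3} holds.

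Next I would choose $j_2$. Let $\theta:=\pi_{1,2}\circ i_\C$. It is finite, so finite-to-one on the compact set $T_0:=\theta^{-1}(\overline{B})$, where $B\subset\R^{n-2}$ is a large ball containing $\pi_{1,2}(i(W))$. By a preliminary small generic perturbation, $\pi_2\circ i$ is an embedding of $W$ into $\R^{n-1}$, which is possible since $n-1\ge\dim W+1$; actually I only need it to be an immersion that is injective on $W$. Then I would use Lemma \ref{lem:functionApproxAK} with $\theta$ and $T:=T_0$, taking $P$ to be a small conjugation-invariant neighbourhood of $W$ in $T$. The goal is a real polynomial $j_1$ that is $\mathcal C^\infty$-close to $i_1$ on $W$ and, on $T\setminus W$, takes values that separate points of $\theta^{-1}(\R^{n-2})\setminus W$ from $W$. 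Concretely, I prescribe a continuous conjugation-equivariant $\psi$ that equals $i_1$ near $W$ and is far from $i_1(W)$, for instance has large imaginary part or a large real value, on the points of $T$ at positive distance from $W$. A bump function interpolates in between, and its transition region is kept in a tiny neighbourhood where $i_\C$ is already injective and immersive by the inverse function theorem. The map $\Phi:=(j_1,i_3,\dots,i_n)$ then satisfies $\Phi_\C^{-1}(\Phi(W))\cap T=W$ on real targets, and the same construction with the roles of the coordinates arranged gives $(j_\C)^{-1}(j(W))=W$. This requires $W=W^\ns$ to equal $(j_\C)^{-1}(j(W))$, which is hypothesis (3) of Lemma \ref{lem:immersion+injection=>nonsingular}. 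So $X^\ns=j(W)$, and $j$ is biregular by Observation \ref{obs:strengthening}.

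For \eqref{lem2cond3} I would argue that $X^\sing\subset j_\C(W_\C\setminus W)\cap\R^n$. Points of $X$ not in $j(W)$ come from nonreal or real-non-$W$ points of $W_\C$, and these lie over $\theta^{-1}(\R^{n-2})$, inside $T$ after restricting to the relevant range. So I need $\pi_2\circ j_\C$ to map $T\setminus W$ away from $\pi_2(j(W))$. This amounts to choosing the coordinate $j_1$ via Lemma \ref{lem:functionApproxAK} so that on $T\setminus U$, for a neighbourhood $U$ of $W$, the first coordinate is "shifted" far from $[-R,R]\supset i_1(W)$. Points of $T$ near $W$ but not on $W$ map near $j(W)$ injectively and so cannot be real preimages of $X$. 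I expect the main obstacle to be exactly this bookkeeping near $W$. Real points of $X_\C$ close to $j(W)$ must lie in $j(W)$, which uses that $j_\C$ is an embedding on a neighbourhood of $W$ in $W_\C$ and that $j(W)$ is a real submanifold of the correct dimension. The fibres of $\pi_2$ also have to be handled for the coordinate $x_2$ left free. I would try to settle this by choosing the shift so that $j_1$ takes values with $|j_1|>R+1$ on $T\setminus U$ whenever the value is real, combined with $\pi_2\circ j$ being injective on $W$.
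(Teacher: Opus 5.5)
\textbf{There is a genuine gap.} It lies exactly in the ``bookkeeping near $W$'' that you flag but do not resolve.

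\textbf{First problem: $\pi_2\circ j$ cannot be made injective.} Your plan ultimately rests on $\pi_2\circ j|_W$ being injective. You invoke this in your last sentence, and it is also what would justify requiring that $\pi_2\circ j_\C$ send all of $T\setminus W$ off $\pi_2(j(W))$. You justify it by saying that a small generic perturbation makes $\pi_2\circ i$ an embedding $W\hookrightarrow\R^{n-1}$ because $n-1\geq\dim W+1$. That is false. Genericity yields embeddings only in the Whitney range $n-1\geq 2\dim W+1$. Moreover, you have frozen $i_3,\dots,i_n$, so only $i_1$ may move. For example, $W=\R P^2$ embeds in $\R^4$, but no continuous map $\R P^2\to\R^3$ is injective, so with $n=4$ no perturbation of $\pi_2\circ i$ helps. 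This is precisely why the lemma asks for \eqref{lem2cond3} rather than injectivity of $\pi_2$ on $X^\ns$.

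\textbf{Second problem: the transition region.} There $j_1$ differs from $i_1$ by a non-small amount, so injectivity of $i_\C$ on that region says nothing about $j_\C$. A condition like ``$|j_1|>R+1$ whenever the value is real'' cannot hold midway through the transition. It also confuses realness of the single coordinate $j_1(w)$ with realness of the whole point $j_\C(w)$.

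\textbf{The missing idea: you only need to control points with real image.} Since $X^\sing\subset\R^n$ and $j_\C\colon W_\C\to X_\C$ is surjective, both the third hypothesis of Lemma~\ref{lem:immersion+injection=>nonsingular} and \eqref{lem2cond3} follow from one condition: if $w\in W_\C$, $j_\C(w)\in\R^n$ and $\pi_2(j_\C(w))\in\pi_2(j(W))$, then $w\in W$.

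To obtain this condition, set up the following.
\begin{itemize}
\item Take $T:=(\pi_{1,2}\circ i_\C)^{-1}(\pi_{1,2}(i(W)))$. Such a $w$ lies in $T$ because $\pi_{1,2}\circ j_\C=\pi_{1,2}\circ i_\C$.
\item Take conjugation-invariant closed neighbourhoods $T_2\subset\Int T_1$ of $W$ in $T$ such that $i_\C|_{T_1}$ is an embedding.
\item Take $C$ with $|(i_\C)_1|<C$ on $T$.
\item Put $j:=i+\tilde\psi e_1$, where $\tilde\psi$ approximates, via Lemma~\ref{lem:functionApproxAK}, a \emph{real-valued} $\psi$ vanishing near $T_2$ and equal to $4C$ off $T_1$.
\end{itemize}

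Then argue region by region.
\begin{itemize}
\item Off $T_1$, the first coordinate gives a contradiction, as you intended.
\item On the compact set $T_1\setminus\Int T_2$, one has $\Vert\Im(i_\C(w))\Vert>\delta>0$. Indeed, $i_\C(w)$ real would give $i_\C(\bar w)=i_\C(w)$, hence $w=\bar w\in W$. Since the shift is real up to an error below $\delta$, $j_\C(w)$ is not real there.
\item On $T_2$, $j_\C$ is $\mathcal C^\infty$-close to $i_\C$, hence injective. Then $j_\C(\bar w)=\overline{j_\C(w)}=j_\C(w)$ forces $w\in W$.
\end{itemize}

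This argument uses injectivity of the full map $j_\C$ near $W$ and never any injectivity of $\pi_2\circ j$.
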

\begin{proof}
We will construct a polynomial map $j:W\rightarrow \R^n$ arbitrarily close to $i$ in the $\mathcal C^\infty$-topology, such that the following two conditions are satisfied:
\begin{eqenum}
    \item the composition $\pi_{1,2}\circ j_\C:W_\C\rightarrow \C^{n-2}$ is a finite map, \label{lem2cond1'}
    \item if $w\in W_\C$ is such that $j_\C(w)\in \R^n$ and $\pi_2\circ j_\C(w)\in \pi_2\circ j(W)$, then $w\in W$. \label{lem2cond2'}
\end{eqenum}
Let us see how \eqref{lem2cond1}-\eqref{lem2cond3} follow from \eqref{lem2cond1'} and \eqref{lem2cond2'}.

First of all, since the finite map $\pi_{1,2}\circ j_\C:W_\C\rightarrow \C^{n-2}$ factors as
\begin{equation*}
    W_\C\xrightarrow{j_\C}X_\C\xrightarrow{\pi_{1,2}}\C^{n-2},
\end{equation*}
where the first map is dominant, we have that $\mathcal O_{X_\C}$ is a $\C[x_3,\dots,x_n]$-submodule of the finite $\C[x_3,\dots,x_n]$-module $\OO_{W_\C}$, so it is itself finite. This proves \eqref{lem2cond2}. Moreover, a fortiori we have that $\OO_{W_\C}$ is a finite $\OO_{X_\C}$-module, so $j_\C$ is finite. 

Notice that \eqref{lem2cond2'} implies that $j_\C^{-1}(j(W))=W$. Indeed, if $w\in W_\C$ satisfies $j_\C(w)\in j(W)$, then $j_\C(w)\in \R^n$ and $\pi_2\circ j_\C(w)\in \pi_2\circ j(W)$, so $w\in W$. This, together with the assumption that $j$ is a $\mathcal C^\infty$-approximation of $i$, shows that the assumptions of Lemma \ref{lem:immersion+injection=>nonsingular} are satisfied, so Lemma \ref{lem:immersion+injection=>nonsingular} and Observation \ref{obs:strengthening} imply that \eqref{lem2cond1} holds true.

Finally, let us show why \eqref{lem2cond3} follows from \eqref{lem2cond1'} and \eqref{lem2cond2'}. Assume that there is some $x\in X^\sing$ such that $\pi_2(x)\in \pi_2(X^\ns)$. Since the map $j_\C:W_\C\rightarrow X_\C$ is finite and dominant, we have that it is surjective. Let $w\in W_\C$ be such that $j_\C(w)=x$. We have that $j_\C(w)\in X\subset \R^n$ and $\pi_2\circ j_\C(w)=\pi_2(x)\in \pi_2(X^\ns)=\pi_2\circ j(W)$. From \eqref{lem2cond2'} it follows that $w\in W$, so $x\in j(W)=X^\ns$. This is a contradiction, which proves \eqref{lem2cond3}.

It remains to construct a polynomial approximation of $i$ satisfying \eqref{lem2cond1'} and \eqref{lem2cond2'}. Applying Lemma \ref{lem1} we may replace $i$ by an arbitrarily close map and assume that $i$ itself is a polynomial map whose complexification $i_\C:W_\C\rightarrow \C^n$ is finite. Since $n\geq \dim W+2$, after performing a generic linear change of coordinates close to the identity, we may assume that the composition $\pi_{1,2}\circ i_\C:W_\C\rightarrow \C^{n-2}$ is also finite.

Define the set 
\begin{equation*}
    T:=(\pi_{1,2}\circ i_\C)^{-1}(\pi_{1,2}\circ i(W))\subset W_\C.
\end{equation*}
Since $\pi_{1,2}\circ i_\C$ is proper and $W$ is compact, we have that $T$ is a compact subset of $W_\C$ containing $W$ and invariant under conjugation. Due to the fact that $i_\C$ is an immersion at points of $W$ and embeds $W$ in $\R^n$, we may find a closed neighbourhood $T_1\subset T$ of $W$ such that $i_\C\vert_{T_1}$ is an embedding of $T_1$ in $\C^n$. After replacing $T_1$ by $T_1\cap \overline{T_1}$, we may assume that $T_1$ is invariant under conjugation. By a similar argument we can then find a smaller closed neighbourhood $T_2\subset \Int T_1$ of $W$ in $T$, contained in the interior of $T_1$ in $T$ and invariant under conjugation.

Since $T$ is compact, there is a constant $C>0$ such that 
\begin{equation}
    |(i_\C(w))_1|<C\text{ for }w\in T, \label{eqC}
\end{equation}
where $(i_\C(w))_1$ denotes the first coordinate of the point $i_\C(w)\in \C^n$. 

Since $i_\C\vert_{T_1}$ is an embedding, it is in particular injective. Let $w\in T_1\backslash \Int T_2$, where the interior is meant in the Euclidean topology on $T$. If we assume that $i_\C(w)\in \R^n$ then $i_\C(\bar w)=\overline{i_\C(w)}=i_\C(w)$, so $w=\bar w$ and hence $w\in W\subset \Int T_2$. This is a contradiction, so we must have $i_\C(T_1\backslash \Int T_2)\cap \R^n=\emptyset$. As the set $T_1\backslash \Int T_2$ is compact, there is a constant $\delta>0$ such that 
\begin{equation}\label{eq:imIneq}
    \Vert \Im(i_\C(w))\Vert>\delta\text{ for }w\in T_1\backslash T_2,
\end{equation}
where, for $v\in \C^n$, $\Im(v)\in \R^n$ denotes the vector consisting of imaginary parts of the coordinates of $v$.

By means of Urysohn's lemma we construct a continuous function $\psi:T\rightarrow \R$, which is constantly equal to zero on an open neighbourhood $P$ of $T_2$ in $T$ and which is constantly equal to $4C$ on $T\backslash T_1$. After substituting $\frac{1}{2}(\psi(w)+\psi(\bar w))$ for $\psi(w)$, we may assume that it satisfies $\psi(\bar w)=\psi(w)=\overline{\psi(w)}$. 

Let $\theta:\C^m\rightarrow \C^{n-2}$ be any polynomial extension of $\pi_{1,2}\circ i_\C$ defined over $\R$. By the definition of $T$ we have $T\subset \theta^{-1}(\R^{n-2})$, and since $\pi_{1,2}\circ i_\C:W_\C\rightarrow \C^{n-2}$ is finite we have that $\theta\vert_T$ is finite-to-one. This allows us to apply Lemma \ref{lem:functionApproxAK} and find a polynomial $\tilde \psi:\C^m\rightarrow \C$ defined over $\R$ such that 
\begin{equation}\label{lem2eqApprox}
    |\tilde \psi(w)-\psi(w)|<\min(\delta,C)\text{ for }w\in T,
\end{equation}
and such that $\tilde \psi\vert_{T_2}$ is a $\mathcal C^\infty$-approximation of $\psi\vert_{T_2}$. Define
\begin{equation*}
    j:W\rightarrow \R^n, \quad j(w):=i(w)+\tilde \psi(w)\cdot e_1, 
\end{equation*}
where $e_1$ is the first coordinate vector in $\R^n$. We claim $j$ satisfies the desired conditions.

It is clear that if $\tilde \psi\vert_W$ is sufficiently close to $\psi\vert_W\equiv 0$ in the $\mathcal C^\infty$-topology, then $j$ is a $\mathcal C^\infty$-approximation of $i$. We have $\pi_{1,2}\circ j_\C=\pi_{1,2}\circ i_\C$, so \eqref{lem2cond1'} is satisfied by the corresponding assumption about $i$. Let us then show that \eqref{lem2cond2'} is satisfied as well.

Let $w\in W_\C$ be a point such that $j_\C(w)\in \R^n$ and $\pi_2\circ j_\C(w)\in \pi_2\circ j(W)$. We need to show that $w\in W$.
 
The assumption that $\pi_2\circ j_\C(w)\in \pi_2\circ j(W)$ implies that
\begin{equation*}
    \pi_{1,2}\circ i_\C(w)=\pi_{1,2}\circ j_\C(w)\in \pi_{1,2}\circ j(W)=\pi_{1,2}\circ i(W),
\end{equation*}
so $w\in T$. Suppose that $w\not\in T_1$. By assumption there is some $w'\in W$ such that $\pi_2\circ j_\C(w)= \pi_2\circ j(w')$. Since $\psi(w)=4C$ and $\psi(w')=0$, projecting this equality onto the first coordinate we find that
\begin{gather*}
    (i_\C(w))_1+\tilde \psi(w)=(i_\C(w'))_1+\tilde \psi(w'), \\
    4C=\psi(w)=(i_\C(w'))_1-(i_\C(w))_1+\left(\tilde \psi(w')-\psi(w')\right)-\left(\tilde \psi(w)-\psi(w)\right)
\end{gather*}
Using \eqref{eqC} and \eqref{lem2eqApprox} we obtain that the absolute value of the right hand side is smaller than $4C$, which is a contradiction. Hence $w\in T_1$.

Suppose now that $w\not\in T_2$. Using \eqref{eq:imIneq}, \eqref{lem2eqApprox} and the fact that $\Im(\psi(w))=0$ we find that
\begin{equation*}
    \Vert \Im(j_\C(w))\Vert\geq \Vert \Im(i_\C(w))\Vert-| \Im(\tilde \psi(w))|\geq \delta -| \Im(\tilde \psi(w)-\psi(w))|>0.
\end{equation*}
This contradicts the assumption that $j_\C(w)\in \R^n$. Hence we must have $w\in T_2$.

Finally note that since $\tilde \psi\vert_{T_2}$ is close to $\psi\vert_{T_2}\equiv 0$ in the $\mathcal C^\infty$-topology, we have that $j_\C\vert_{T_2}$ is close to $i_\C\vert_{T_2}$ in the $\mathcal C^\infty$-topology. We can assume that they are so close that $j_\C\vert_{T_2}$ is an embedding, so in particular it is injective. Since $j_\C(w)\in \R^n$, we have that $j_\C(\bar w)=\overline{j_\C(w)}=j_\C(w)$. Hence we must have $w=\bar w$, i.e. $w\in W_\C\cap \R^m=W$. This finishes the proof.
\end{proof}

We will now strengthen the conclusion of Lemma \ref{lem2}, so that the projection of the entire singular locus of $X_\C$ is disjoint from the projection of $X^\ns$:
\begin{lem}\label{lem3}
Let $W\subset \R^m$ be a nonsingular compact algebraic set. Let $i:W\rightarrow \R^n$ be a smooth map, which embeds $W$ in $\R^n$ as a smooth manifold. Assume that $n\geq \dim W+2$. Then, there is a polynomial map $j:W\rightarrow \R^n$ arbitrarily close to $i$ in the $\mathcal C^\infty$-topology, such that if we define $X:=\overline{j(W)}^\Zar$ then the following conditions are satisfied:
\begin{eqenum}
    \item $X^\ns=j(W)$ and $j$ is a biregular isomorphism between $W$ and $X^\ns$, \label{lem3condIso}
    \item the map $\pi_{1,2}\vert_{X_\C}:X_\C\rightarrow \C^{n-2}$ is finite, \label{lem3condFinite}
    \item if we define $S\subset X_\C$ as the singular locus of $X_\C$ then $\pi_2(S)\cap \pi_2(X^\ns)=\emptyset$. \label{lem3condEmpty}    
\end{eqenum}
\end{lem}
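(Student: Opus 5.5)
The plan is to start from the output of Lemma \ref{lem2} and perform a second singularity shift. Lemma \ref{lem2} only controls the real singular locus $X^\sing$. Here we push the remaining \emph{complex} singular points off the real fibres of $\pi_2$ over $\pi_2(X^\ns)$.

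\textbf{Step 1: apply Lemma \ref{lem2} and locate the bad set.} First apply Lemma \ref{lem2} to get a polynomial $j$ close to $i$, with $X=\overline{j(W)}^\Zar$ satisfying \eqref{lem2cond1}--\eqref{lem2cond3}. Let $S$ be the singular locus of $X_\C$. Since $X_\C$ is smooth near the compact set $X^\ns$, the closed set $S$ has positive distance from $X^\ns$. Suppose $s\in S$ satisfies $\pi_2(s)=\pi_2(p)$ for some $p\in X^\ns$. Then all coordinates of $s$ except possibly $s_2$ are real. If $s_2$ were real, then $s\in X^\sing$, contradicting \eqref{lem2cond3}. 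Hence every such $s$ has $\Im s_2\neq 0$, and in particular $s\neq\bar s$. The bad set
\begin{equation*}
K:=S\cap \pi_2^{-1}(\pi_2(X^\ns))
\end{equation*}
is contained in the compact set $T:=(\pi_{1,2}\vert_{X_\C})^{-1}(\pi_{1,2}(X^\ns))$, which is compact by \eqref{lem2cond2}. So $K$ is compact, conjugation invariant, and disjoint from $X$.

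\textbf{Step 2: shift along $x_1$ by a function with nonzero imaginary part on $K$.} Choose a continuous function $\psi$ on $T$ with $\psi(\bar z)=\overline{\psi(z)}$, vanishing on a neighbourhood $P$ of $X^\ns$ in $T$, and satisfying $\Im\psi\neq 0$ on $K$. For example, take $\psi(z)=\sqrt{-1}\,\chi(z)\,\Im z_2$ with $\chi$ a real cutoff equal to $1$ near $K$ and $0$ near $X^\ns$. Apply Lemma \ref{lem:functionApproxAK} with $\theta=\pi_{1,2}$ (more precisely, a real polynomial extension of $\pi_{1,2}\circ j_\C$ on $W_\C$, as in the proof of Lemma \ref{lem2}). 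This gives a real polynomial $\tilde\psi$ that is $\mathcal C^0$-close to $\psi$ on $T$ and $\mathcal C^\infty$-close to $0$ near $W$. Set $j':=j+\tilde\psi\, e_1$ and $X':=\overline{j'(W)}^\Zar$. Since $\pi_{1,2}\circ j'_\C=\pi_{1,2}\circ j_\C$, the finiteness condition \eqref{lem3condFinite} persists. Condition \eqref{lem3condIso} would then follow, exactly as in Lemma \ref{lem2}, from Lemma \ref{lem:immersion+injection=>nonsingular} and Observation \ref{obs:strengthening}, once we show $j'^{-1}_\C(j'(W))=W$. The argument with the constants $C,\delta$ and the neighbourhoods $T_1\supset T_2$ of $W$ carries over verbatim, now run in $W_\C$.

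\textbf{Step 3: control the new singular locus (the main obstacle).} Let $S'$ be the singular locus of $X'_\C$. The map $W_\C\to X'_\C$ is finite and surjective, and $X'_\C$ is smooth wherever this map is a local isomorphism with a single preimage. Hence every point of $S'$ is the image of either
\begin{enumerate}
\item a point of $W_\C$ over $S$, whose $x_1$-coordinate has been moved by approximately $\psi$, which is non-real on $K$; or
\item a point where $j'_\C$ fails to be an injective immersion.
\end{enumerate}
For points of type (1) lying over $K$, the first coordinate of $\pi_2$ of the image is non-real. Therefore the image is not in $\pi_2(X'^\ns)=\pi_2(j'(W))\subset\R^{n-1}$. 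For points of type (1) not over $K$, the value $\pi_{1,2}$ already lies outside $\pi_{1,2}(X^\ns)$, so again they miss. Points of type (2) whose $\pi_2$-image is real and lies over $\pi_{1,2}(X^\ns)$ belong to $T$. There I would redo the Lemma \ref{lem2} estimates. Away from $T_1$, the shift $\psi$ is made real and large, taking $\psi=4C$ on $T\setminus T_1$ as before; there the first coordinates cannot match. Inside $T_1$, the map $j'_\C$ stays an embedding. Reconciling these two requirements on $\psi$ (large real values away from $W$, yet a nonzero imaginary part on $K$) is the delicate point. I would take $\psi=4C+\sqrt{-1}\,\chi\,\Im z_2$ on $T\setminus T_1$ and shrink $T_1$ so that it misses $K$. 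Then I would check that $\mathcal C^0$-closeness on $T$ keeps both inequalities strict, which would give \eqref{lem3condEmpty}.
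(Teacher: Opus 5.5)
Your Step 1 is correct: every $s\in S$ with $\pi_2(s)\in\pi_2(X^\ns)$ has non-real $s_2$. Step 2 also correctly preserves \eqref{lem3condIso} and \eqref{lem3condFinite}. The proof breaks in Step 3, where you expected difficulty, and the gap is genuine.

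\textbf{Why Step 3 fails.} You perturb by a function $\tilde\psi$ on the source $W_\C$. Then $X'_\C$ is not isomorphic to $X_\C$, and $S'$ has no controlled relation to $S$.
\begin{itemize}
\item A double point $j_\C(w)=j_\C(w'')\in S$ is simply split apart once $\tilde\psi(w)\neq\tilde\psi(w'')$.
\item New double points appear wherever $\pi_1 j_\C(w)=\pi_1 j_\C(w'')$ and $j_\C(w)_1-j_\C(w'')_1=\tilde\psi(w'')-\tilde\psi(w)$.
\end{itemize}
So your ``type (1)'' points are not a real mechanism; every point of $S'$ is of type (2). Also, $W_\C$ itself may be singular away from $W$.

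The imaginary part you place on $K$ is redundant. Once $T_1$ misses the preimage of $K$, the $4C$ estimate already rules out every $w\in T\setminus T_1$ with $\pi_2 j'_\C(w)\in\pi_2 j'(W)$. Everything therefore hinges on $T_1$, where you assert that $j'_\C$ stays an embedding. That does not follow.

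Lemma \ref{lem:functionApproxAK} gives only $\mathcal C^0$-control on the transition region $T_1\setminus P$, where $\psi$ climbs from $0$ to $4C$. There $\tilde\psi$ can glue pairs $w\neq w''$ in $T_1$ with $\pi_1 j_\C(w)=\pi_1 j_\C(w'')$. The glued point lies in $S'$, its coordinates $3,\dots,n$ are real, and its first coordinate may be real and match a point of $\pi_2 j'(W)$.

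In Lemma \ref{lem2} this region is harmless because the points considered satisfy $j_\C(w)\in\R^n$, which contradicts $\Vert\Im j_\C(w)\Vert>\delta$. Here $\pi_2$ forgets the second coordinate, so the relevant $s'$ can have $\Im s'_2\neq 0$ and that estimate gives nothing. The transition region cannot be avoided, so \eqref{lem3condEmpty} is not established.

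\textbf{The missing idea.} Perform the second shift in the target, by the polynomial automorphism $f(x)=(x_1+\tilde\psi(x_2,\dots,x_n),x_2,\dots,x_n)$ of $\C^n$. Then $X_\C=f(Y_\C)\cong Y_\C$, and the singular locus is transported exactly, $S=f(Z)$, with nothing new created.

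For this, $\tilde\psi$, which is a function of $\pi_1(x)$, must be $\mathcal C^\infty$-small near $\pi_1(Y^\ns)$. It must also be $\approx 4C$ on $\pi_1$ of those singular points $z$ over $\pi_{1,2}(Y^\ns)$ with $\pi_2(z)$ real. These two sets are disjoint exactly because the paper first applies Lemma \ref{lem2} with $x_1$ and $x_2$ swapped, which gives $\pi_1(Y^\sing)\cap\pi_1(Y^\ns)=\emptyset$. The remaining singular points over $\pi_{1,2}(Y^\ns)$ satisfy $\Vert\Im\pi_2(z)\Vert>\delta$, and a shift with small imaginary part keeps them non-real. Your use of the unswitched Lemma \ref{lem2} cannot supply this, because it controls the wrong projection.
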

\begin{proof}
Applying Lemma \ref{lem2} with the first two variables switched, we may assume that $i:W\rightarrow \R^n$ is a polynomial map such that if we define $Y:=\overline{i(W)}^\Zar$ then the following properties are satisfied:
\begin{eqenum}
    \item $Y^\ns=i(W)$ and $i$ is a biregular isomorphism between $W$ and $Y^\ns$, \label{lem3assumpt1}
    \item the map $\pi_{1,2}\vert_{Y_\C}:Y_\C\rightarrow \C^{n-2}$ is finite, \label{lem3assumpt2}
    \item $\pi_1(Y^\sing)\cap \pi_1(Y^\ns)=\emptyset$. \label{lem3assumpt3}
\end{eqenum}
Let $Z\subset Y_\C$ be the singular locus of $Y_\C$. By definition $Z\cap \R^n=Y^\sing$. Define the two following subsets of $Z$:
\begin{align*}
    R_1&:=\{z\in Z: \pi_{1,2}(z)\in \pi_{1,2}(Y^\ns)\},\\
    R_2&:=\{z\in R_1:\pi_2(z)\in \R^{n-1}\}.
\end{align*}
Due to \eqref{lem3assumpt1} the set $Y^\ns$ is compact, so thanks to \eqref{lem3assumpt2} the set $R_1$ is compact, and hence $R_2$, being closed in $R_1$, is compact as well. Both $R_1$ and $R_2$ are invariant under complex conjugation. Using compactness of the sets $R_1$ and $Y^\ns$ we find a constant $C>0$ such that
\begin{equation}\label{lem3eqC}
    |y_1|<C\text{ for }y\in R_1\cup Y^\ns,
\end{equation}
where $y_1\in \C$ denotes the first coordinate of a point $y\in \C^n$.

Define now the following subsets of $\C^{n-1}$:
\begin{equation*}
    T:=\pi_1(R_1)\cup \pi_1(Y^\ns),\quad T_1:=\pi_1(R_1),\quad T_2:=\pi_1(R_2).
\end{equation*}
We have that $T_2\subset T_1\subset T$ and all these three sets are compact and invariant under conjugation. 

We now make the important observation that $T_2$ is disjoint from $\pi_1(Y^\ns)$. Indeed, let $z\in R_2$ and assume that $\pi_1(z)\in \pi_1(Y^\ns)$. In particular we have $\pi_1(z)\in \R^{n-1}$. Moreover, from the definition of $R_2$ we have that $\pi_2(z)\in \R^{n-1}$, so in fact $z\in Z\cap \R^{n}=Y^\sing$. This contradicts \eqref{lem3assumpt3}. 

Let $T'$ be an open neighbourhood of $T_2$ in $T$ whose closure is disjoint from $\pi_1(Y^\ns)$, and which is invariant under complex conjugation. Define 
\begin{equation*}
    R':=(\pi_1\vert_{R_1})^{-1}(T').    
\end{equation*}
We have $T_2\subset T'\subset T_1$ and $R_2\subset R'\subset R_1$. The set $R_1\backslash R'$ is compact, and by the definition of $R_2$ its image $\pi_2(R_1\backslash R')\subset \C^{n-1}$ under $\pi_2$ is disjoint from $\R^{n-1}$. From compactness we deduce that there is a constant $\delta>0$ such that
\begin{equation}\label{lem3eqDelta}
    \Vert\Im(\pi_2(z))\Vert>\delta\text{ for }z\in R_1\backslash R'.
\end{equation}

Let $\theta:\C^{n-1}\rightarrow \C^{n-2}$ be the projection onto the last $n-2$ variables. We have that $\pi_{1,2}=\theta\circ\pi_1$. Since $\pi_{1,2}\vert_{Y_\C}$ is finite-to-one by \eqref{lem3assumpt2} and $T\subset\pi_1(Y_\C)$, we have that $\theta\vert_{T}$ is finite-to-one. Moreover
\begin{equation*}
    \theta(T)=\theta(\pi_1(R_1)\cup \pi_1(Y^\ns))\subset \pi_{1,2}(R_1) \cup \pi_{1,2}(Y^\ns)\subset \pi_{1,2}(Y^\ns)\subset \R^{n-2}.
\end{equation*}

Let $\psi:T\rightarrow \R$ be any continuous function, which is identically equal to zero on an open neighbourhood $P$ of $\pi_1(Y^\ns)$ in $T$ and which is identically equal to $4C$ on $T'$. Substituting $\frac{1}{2}(\psi(t)+\psi(\bar t))$ for $\psi(t)$ we may assume that it satisfies $\psi(\bar t)=\psi(t)=\overline{\psi(t)}$ for $t\in T$. Applying Lemma \ref{lem:functionApproxAK} (with $T,P,\theta$ and $\psi$ defined as in the last few paragraphs) we find a polynomial $\tilde \psi:\C^{n-1}\rightarrow \C$ defined over $\R$ such that 
\begin{equation}\label{lem3eqApprox}
    |\tilde \psi(t)-\psi(t)|<\min(\delta,C)\text{ for }t\in T
\end{equation}
and $\tilde\psi\vert_{\pi_1(Y^\ns)}$ is $\mathcal C^\infty$-close to $\psi\vert_{\pi_1(Y^\ns)}\equiv 0$. Define the polynomial map 
\begin{align*}
    f&:\C^n\rightarrow \C^n,\\
    f(x_1,\dots,x_n)&:=(x_1+\tilde \psi(x_2,\dots,x_n),x_2,\dots,x_n).
\end{align*}
We have that $f$ is defined over $\R$ and is a polynomial automorphism of $\C^n$ with the inverse given by
\begin{align*}
    f^{-1}&:\C^n\rightarrow \C^n,\\
    f^{-1}(x_1,\dots,x_n)&:=(x_1-\tilde \psi(x_2,\dots,x_n),x_2,\dots,x_n).
\end{align*}

Define $X:=f(Y)$ and $j:=f\circ i$. Since $\tilde \psi\vert_{\pi_1(Y^\ns)}$ is $\mathcal C^\infty$-close to zero, we have that $j$ is $\mathcal C^\infty$-close to $i$. Moreover, since $f$ is an automorphism of $\C^n$ it restricts to an isomorphism between $Y_\C$ and $X_\C$. It then further restricts to an isomorphism between $Y^\ns$ and $X^\ns$, so $j$ is a biregular isomorphism between $W$ and $X^\ns$ as desired. We also have that $\pi_{1,2}\vert_{X_\C}=\pi_{1,2}\vert_{Y_\C}\circ f^{-1}\vert_{X_\C}$ is finite, so it remains to show that \eqref{lem3condEmpty} is satisfied.

Let $s\in S$, where $S$ is the singular locus of $X_\C$. Suppose by contradiction that
\begin{equation}\label{lem3eqpiS}
    \pi_2(s)\in \pi_2(X^\ns).
\end{equation}
Define $z:=f^{-1}(s)$. Since $f^{-1}\vert_{X_\C}$ is an isomorphism onto $Y_\C$ we have $z\in Z$. The equation \eqref{lem3eqpiS} turns into 
\begin{equation}\label{lem3eqpiZ}
    \pi_2\circ f(z)\in \pi_2\circ f(Y^\ns).
\end{equation}
Projecting this onto the last $n-2$ coordinates we get
\begin{equation*}
    \pi_{1,2}(z)=\pi_{1,2}\circ f(z)\in \pi_{1,2}\circ f(Y^\ns)=\pi_{1,2}(Y^\ns).
\end{equation*}
Hence, by the definition of $R_1$, $z\in R_1$. We now consider two cases depending on whether $z\in R'$ or not. 

Assume that $z\in R'$. Let $y\in Y^\ns$ be such that 
\begin{equation}\label{lem3eqpiZy}
    \pi_2\circ f(z)= \pi_2\circ f(y).
\end{equation}
Since $\pi_1(z)\in T'$, we have that $\psi(\pi_1(z))=4C$. Similarly $y\in Y^\ns$, so $\psi(\pi_1(y))=0$. Projecting \eqref{lem3eqpiZy} onto the first coordinate we find that
\begin{gather*}
    z_1+\tilde \psi(\pi_1(z))=y_1+\tilde \psi(\pi_1(y))\\
    4C=\psi(\pi_1(z))= y_1-z_1+\left(\tilde \psi(\pi_1(y))-\psi(\pi_1(y))\right)-\left(\tilde \psi(\pi_1(z))-\psi(\pi_1(z))\right).
\end{gather*}
Using \eqref{lem3eqC} and \eqref{lem3eqApprox} we conclude that the absolute value of the right hand side is strictly smaller than $4C$. This is a contradiction.

Assume hence that $z\in R_1\backslash R'$. Note that $\pi_2\circ f(Y^\ns)=\pi_2(X^\ns)\subset \R^{n-1}$. From \eqref{lem3eqpiZ} we then deduce that
\begin{equation}\label{lem3eqpiZinRn}
    \pi_2\circ f(z)\in \R^{n-1}.
\end{equation}
On the other hand from \eqref{lem3eqDelta}, \eqref{lem3eqApprox} the definition of $f$ and the fact that $\psi(\pi_1(z))\in \R$ we get that
\begin{multline*}
    \Vert\Im(\pi_2\circ f(z))\Vert\geq \Vert \Im(\pi_2(z))\Vert - \left\Vert \Im\left(\pi_2(z)-\pi_2\circ f(z)\right)\right\Vert = \\
    = \Vert \Im(\pi_2(z))\Vert -\left|\Im\left(\tilde \psi(\pi_1(z))\right)\right|=\Vert \Im(\pi_2(z))\Vert -\left|\Im\left(\tilde \psi(\pi_1(z))-\psi(\pi_1(z))\right)\right|>0.
\end{multline*}
This contradicts \eqref{lem3eqpiZinRn}.

This shows that our assumption that $\pi_2(s)\in \pi_2(X^\ns)$ must be false and proves the desired condition \eqref{lem3condEmpty}.
\end{proof}

Theorem \ref{thm:main} now follows with little effort:
\begin{proof}[Proof of Theorem \ref{thm:main}]
After applying Lemma \ref{lem3} with the first two variables switched we may assume that the map $i:W\rightarrow \R^n$ is polynomial, and that if we define $Y:=\overline{i(W)}^\Zar$ then the following conditions are satisfied:
\begin{enumerate}
    \item $Y^\ns=i(W)$ and $i$ is a biregular isomorphism between $W$ and $Y^\ns$, \label{proofthmcondIso}
    \item the map $\pi_{1,2}\vert_{Y_\C}:Y_\C\rightarrow \C^{n-2}$ is finite, \label{proofthmcondFinite}
    \item if we define $S\subset Y_\C$ as the singular locus of $Y_\C$ then $\pi_1(S)\cap \pi_1(Y^\ns)=\emptyset$. \label{proofthmcondEmpty}    
\end{enumerate}
Since the projection $\pi_{1,2}$ factors through $\pi_1$, from \eqref{proofthmcondFinite} we deduce that $\pi_1\vert_{Y_\C}$ is finite as well. Hence $\pi_1(S)\subset \C^{n-1}$ is a complex algebraic set defined over $\R$. Let $p\in \R[x_2,\dots,x_n]$ be the sum of squares of generators of the ideal of real polynomials whose complexifications vanish on $\pi_1(S)$. As the set $\pi_1(Y^\ns)$ is disjoint from $\pi_1(S)$ and contained in $\R^{n-1}$, we have that $p$ is strictly greater than zero at all points of $\pi_1(Y^\ns)$. Let $U:=Y_\C \backslash (\C\times \Z(p))$, where $\Z(p)$ denotes the vanishing locus of $p$ in $\C^{n-1}$. By the definition of $p$ we have that $S\subset \C\times \Z(p)$, so $U$ is a nonsingular complex quasi-affine variety, which satisfies $U\cap \R^n=Y^\ns$.

Choose a small $\varepsilon>0$ and consider the following complex regular map defined over $\R$:
\begin{equation*}
    f:U\rightarrow \C^n,\quad f(y):=\left(y_1+\frac{\varepsilon}{p(y_2,\dots,y_n)},y_2,\dots,y_n\right),
\end{equation*}

We claim that this map is proper. To prove this, it suffices to show that if $(y^i)_{i=1}^\infty\in U$ is a sequence of points such that the sequence $f(y^i)$ of their images is bounded, then $(y^i)_{i=1}^\infty$ admits a subsequence convergent to a point $y\in U$. Let $(y^i)_{i=1}^\infty$ be such a sequence of points. Then the sequence $\pi_{1,2}(y^i)=\pi_{1,2}\circ f(y^i)$ is bounded. Since the map $\pi_{1,2}\vert_{Y_\C}$ is proper, after passing to a subsequence we may assume that $(y^i)_{i=1}^\infty$ converges to a point $y\in Y_\C$. We must have that $y\not \in \C\times \Z(p)$, for otherwise the first coordinate of $f(y^i)$ would escape to infinity as $i$ goes to infinity. 

Since proper morphisms are closed in the Zariski topology, it follows that $X_\C:=f(U)$ is an algebraic subset of $\C^n$. It is invariant under complex conjugation, hence defined over $\R$. We have that $f$ is an isomorphism from $U$ onto $X_\C$, with the inverse given by
\begin{equation*}
    f^{-1}:X_\C \rightarrow U,\quad f^{-1}(x):=\left(x_1-\frac{\varepsilon}{p(x_2,\dots,x_n)},x_2,\dots,x_n\right).
\end{equation*}
Since $U$ is nonsingular, it follows that $X_\C$ is nonsingular as well. As $f$ is defined over $\R$, it induces an isomorphism between the real loci of $U$ and $X_\C$. Therefore if we define $X:=f(U\cap \R^n)=f(Y^\ns)$ then $X$ is a nonsingular real algebraic subset of $\R^n$ and $f\vert_{Y^\ns}$ is a biregular isomorphism onto $X$. Moreover, since $Y^\ns=i(W)$ is Zariski dense in $Y_\C$ and hence also in $U$, we have that $X$ is Zariski dense in $X_\C$. It follows that $X_\C$ is the Zariski closure of $X$ in $\C^n$; in particular this closure is nonsingular.

Finally, if $\varepsilon$ is small then $f\vert_{Y^\ns}$ is arbitrarily close to the identity as a map into $\R^n$. It suffices to take $j:=f\vert_{Y^\ns}\circ i$.
\end{proof}
\section*{Acknowledgments}
The author is grateful to his mentor Wojciech Kucharz for guidance and support during his studies at the Jagiellonian University, and for help in preparing this and other manuscripts in real algebraic geometry.

The author was partially supported by the Polish National Science Center (NCN) under grant number 2024/53/N/ST1/02481.

\printbibliography

@article{akbulutApproximatingSubmanifoldsAlgebraic1992,
  title = {On approximating submanifolds by algebraic sets and a solution to the {{Nash}} conjecture},
  author = {Akbulut, S. and King, H.},
  year = 1992,
  month = dec,
  journal = {Invent Math},
  volume = {107},
  number = {1},
  pages = {87--98},
  issn = {1432-1297},
  doi = {10.1007/BF01231882}
}

@article{akbulutTranscendentalSubmanifoldsRn1993,
  title = {Transcendental submanifolds of {{Rn}}},
  author = {Akbulut, S. and King, H.},
  year = 1993,
  month = dec,
  journal = {Commentarii Mathematici Helvetici},
  volume = {68},
  number = {1},
  pages = {308--318},
  issn = {1420-8946},
  doi = {10.1007/BF02565821}
}

@article{bilskiApproximationPiecewiseregularMaps2020,
  title = {Approximation by piecewise-regular maps},
  author = {Bilski, Marcin and Kucharz, Wojciech},
  year = 2020,
  month = dec,
  journal = {Advances in Mathematics},
  volume = {375},
  pages = {107350},
  issn = {0001-8708},
  doi = {10.1016/j.aim.2020.107350}
}

@book{bochnakRealAlgebraicGeometry1998,
  title = {Real {{Algebraic Geometry}}},
  author = {Bochnak, Jacek and Coste, Michel and Roy, Marie-Fran{\c c}oise},
  year = 1998,
  publisher = {Springer},
  address = {Berlin, Heidelberg},
  doi = {10.1007/978-3-662-03718-8},
  copyright = {http://www.springer.com/tdm},
  isbn = {978-3-662-03718-8}
}

@article{bochnakVectorBundlesReal1989,
  title = {Vector bundles over real algebraic varieties},
  author = {Bochnak, J. and Buchner, M. and Kucharz, W.},
  year = 1989,
  month = may,
  journal = {K-Theory},
  volume = {3},
  number = {3},
  pages = {271--298},
  issn = {15730514, 09203036},
  doi = {10.1007/BF00533373}
}

@article{comessattiSullaConnessioneSuperficie1914,
  title = {{Sulla connessione delle superficie razionali reali}},
  author = {Comessatti, Annibale},
  year = 1914,
  month = dec,
  journal = {Annali di Matematica, Serie III},
  volume = {23},
  number = {1},
  pages = {215--284},
  issn = {0373-3114},
  doi = {10.1007/BF02419577}
}

@book{hartshorneAlgebraicGeometry1977,
  title = {Algebraic {{Geometry}}},
  author = {Hartshorne, Robin},
  year = 1977,
  series = {Graduate {{Texts}} in {{Mathematics}}},
  volume = {52},
  publisher = {Springer},
  address = {New York, NY},
  doi = {10.1007/978-1-4757-3849-0},
  copyright = {http://www.springer.com/tdm},
  isbn = {978-1-4757-3849-0}
}

@book{hirschDifferentialTopology1976,
  title = {Differential {{Topology}}},
  author = {Hirsch, Morris W.},
  year = 1976,
  series = {Graduate {{Texts}} in {{Mathematics}}},
  volume = {33},
  publisher = {Springer},
  address = {New York, NY},
  doi = {10.1007/978-1-4684-9449-5},
  copyright = {http://www.springer.com/tdm},
  isbn = {978-1-4684-9449-5}
}

@article{ivanovImprovementNashTognoliTheorem1984,
  title = {Improvement of the {{Nash-Tognoli}} theorem},
  author = {Ivanov, N. V.},
  year = 1984,
  month = jul,
  journal = {J Math Sci},
  volume = {26},
  number = {1},
  pages = {1642--1645},
  issn = {1573-8795},
  doi = {10.1007/BF01106439}
}

@book{mangolteRealAlgebraicVarieties2020,
  title = {Real {{Algebraic Varieties}}},
  author = {Mangolte, Fr{\'e}d{\'e}ric},
  year = 2020,
  series = {Springer {{Monographs}} in {{Mathematics}}},
  publisher = {Springer International Publishing},
  address = {Cham},
  doi = {10.1007/978-3-030-43104-4},
  copyright = {http://www.springer.com/tdm},
  isbn = {978-3-030-43104-4}
}

@article{nashRealAlgebraicManifolds1952,
  title = {Real {{Algebraic Manifolds}}},
  author = {Nash, John},
  year = 1952,
  journal = {Ann. Math.},
  volume = {56},
  number = {3},
  eprint = {1969649},
  eprinttype = {jstor},
  pages = {405--421},
  publisher = {[Annals of Mathematics, Trustees of Princeton University on Behalf of the Annals of Mathematics, Mathematics Department, Princeton University]},
  issn = {0003-486X},
  doi = {10.2307/1969649}
}

@book{rudinRealComplexAnalysis1974,
  title = {Real and {{Complex Analysis}}},
  author = {Rudin, Walter},
  year = 1974,
  publisher = {McGraw-Hill Science, Engineering \& Mathematics}
}

@article{seifertAlgebraischeApproximationMannigfaltigkeiten1936,
  title = {{Algebraische Approximation von Mannigfaltigkeiten.}},
  author = {Seifert, H.},
  year = 1936,
  journal = {Math. Z.},
  volume = {41},
  pages = {1--17},
  issn = {0025-5874; 1432-1823}
}

@article{tognoliSuCongetturaDi1973,
  title = {{Su una congettura di Nash}},
  author = {Tognoli, A.},
  year = 1973,
  journal = {Ann. Della Scuola Norm. Super. Pisa - Sci. Fis. E Mat.},
  volume = {27},
  number = {1},
  pages = {167--185},
  issn = {0036-9918}
}
\end{document}